\newcommand{\calP}{{\mathcal P}}
\newcommand{\calX}{{\mathcal X}}
\newcommand{\calY}{{\mathcal Y}}
\newcommand{\calZ}{{\mathcal Z}}
\newcommand{\sfS}{{\mathsf S}}
\newcommand{\calQ}{{\mathcal Q}}
\newcommand{\calD}{{\mathcal D}}
\newcommand{\mG}{{\mathbb G}}
\newcommand{\mV}{{\mathbb V}}
\newcommand{\mE}{{\mathbb E}}
\newcommand{\mL}{{\mathbb L}}
\newcommand{\ppp}{{p}}
\newcommand{\gra}{{\alpha}}
\newcommand{\gre}{{\varepsilon}}
\newcommand{\grb}{{\beta}}
\newcommand{\grg}{{\gamma}}
\newcommand{\grf}{{\varphi}}
\newcommand{\vuoto}{{\varnothing}}
\newcommand{\supp}[1]{{supp\,(#1)}}
\newcommand{\calF}{{\mathcal F}}
\newcommand{\calA}{{\mathcal A}}
\newcommand{\calC}{{\mathcal C}}
\newcommand{\calR}{{\mathcal R}}
\newcommand{\calU}{{\mathcal U}}
\newcommand{\calW}{{\mathcal W}}
\newcommand{\mZ}{{\mathbb Z}}
\newcommand{\mR}{{\mathbb R}}
\newcommand{\st}{:}
\newcommand{\mand}{{\text{ and }}}
\newcommand{\grs}{\sigma}
\DeclareMathOperator{\codim}  {codim}
\DeclareMathOperator{\card} {card}
\DeclareMathOperator{\Int} {Int}
\DeclareMathOperator{\Est} {Est}
\newcommand{\lra} {\longrightarrow}
\newcommand{\senza}{\setminus}
\newtheorem{theorem}{Theorem}
\newtheorem{lemma}[theorem]{Lemma}
\newtheorem{prop}[theorem]{Proposition}
\newtheorem{corollary}[theorem]{Corollary}
\title[Perfect simulation for the random cluster model]{Perfect simulation for
 the infinite random cluster model, Ising and Potts models at low or high temperature}
\author{Emilio De Santis, Andrea Maffei}
\begin{document}

\begin{abstract}
In this article we create a new algorithm for the perfect simulation
of the infinite Potts model at a sufficiently small or at a
sufficiently high temperature, in particular under the transition
phase temperature. We study the model for free boundary conditions
and we give some consequences for the constant boundary conditions.
\end{abstract}

\maketitle

\begin{comment}

\author{ Emilio De Santis \\
%EndAName
{\small Dipartimento di Matematica }\\
{\small Sapienza Universit\`a di Roma}\\
{\small \texttt{desantis@mat.uniroma1.it}} \and Andrea Maffei \\
%EndAName
{\small Dipartimento di Matematica }\\
{\small Universit\`a di Pisa}\\
{\small \texttt{maffei@dm.unipi.it}}\date{}} \maketitle
\end{comment}

Given a countable graph $ G = (V, E) $, a positive number $q$ and
parameters $p= \{ p_e \in [0,1]: e \in E\} $, the random cluster
measure is defined on the measurable space $(\Omega,\mathcal{F}) $,
where $ \Omega=\{ 0,1\}^E$ and $\calF$ is the $\sigma$--algebra
generated by finite cylinders. This measure was introduced by
Fortuin and Kasteleyn as a way to study the Ising and Potts models
(see \cite{FK}).

In our paper we do not require the parameters $p_e$ to be all equal
to the same constant; however we limit our attention only on the
models with $q>1$. This choice has been made both to maintain the
article simpler and also because the case $q > 1$ has important
connections with the statistical mechanics and in particular with
the models of Ising and Potts.

The aim of this paper is to construct an algorithm which gives a
perfect simulation of a random cluster measure on a finite region of
an infinite graph. Notice that even if the perfect simulation is
obtained only on a finite region, it takes into account the fact
that the random field on this region is influenced by the value of
the filed on the whole infinite graph. On any finite graph it is
possible to define, in an explicit and computable way, the random
cluster measure. However, the random cluster measure associated to
the finite region in this way, in general, is not the restriction of
the measure of the infinite graph on this finite region.

Now we briefly explain how this simulation is obtained. As will be
recalled in Sections \ref{sec:misure} and \ref{sec:dinamica} the
random cluster measure is invariant under a markovian dynamics.
Introduce a countable number of copies of the graph $G$ and think
them as placed at level, $0$, $-1$, $-2$, etc. Choose also an order
of the edges of $G$: $e_1$, $e_2$, etc. For a configuration
$\omega\in \Omega$ of the graph at level $-M$ create new
configurations at level above $-M$ updating the value of
$\omega_{e_k}$ one at the time, according to the conditional
probabilities that depends geometrical shape of the configuration.
The details of this dynamics are given in Section
\ref{sec:dinamica}. Let us just recall here that the law used to
update the value of $\omega_{e_k}$ depends on the existence of a
connected path of edges $e$, different from $e_k$, such that
$\omega_{e}=1$, joining the end vertexes of $e_k$. The construction
of this dynamics can be seen as a particular case of the Glauber
dynamics. In the study of the random cluster measures similar
dynamics where already considered, for example by Grimmett in
\cite{Grimmett95}.

We now construct a coupling of all the dynamics for each possible
initial configuration. We color with black, gray or white, in an
independent way, the edges of all the copies of $G$. The law used to
color the edge $e$ at level $-\ell$ depends only on the parameter
$p_e$. The coupling is constructed in such a way that for all
possible initial configurations $\omega$, the value of the
configuration in $e$ at level $-\ell$ will be $1$ whenever the edge
$e$ at level $-\ell$ is black and $0$ whenever it is white.

Now fix a finite region $F$ of $G$. In Theorem \ref{teo:pqalta} in
the case of high temperature (which corresponds to $p_e$ close to
zero), we prove that, for almost all coloring as above, there exists
a finite region $C_F^b$ of the union of all the copies of $G$ that
is ``surrounded'' by white edges and containing the region $F$ at
level $0$. Finally, in Theorem \ref{teorema2} we prove that, given a
coloring, if the region $C^b_F$ is finite then we can determine a
bigger finite region $\bar H$ such that the output of the dynamics
described above at level $0$ in the region $F$ does not depend on
the choice of $\omega$ and on the coloring outside the region $\bar
H$.

Similar results are proved in Theorems \ref{teo:pqbassa} and
\ref{teorema1} in the case of low temperature which corresponds to
$p_e$ close to one. However in this case the meaning of the word
``surrounded'' is different. To treat this case we have to make some
further assumption on the geometry of the graph $G$. For this reason
in Section \ref{sec:grafisimpliciali} we introduce the concept of
simplicial graph. These are the graphs that can be obtained as the
vertexes and edges of a tessellation of an euclidean space. The
first example we have in mind is the cubic lattice $\mathbb{L}^d=(
\mathbb{Z}^d, \mathbb{E}_d)$. In Section \ref{sec:grafisimpliciali}
we prove also the results required in the proof of Theorems
\ref{teo:pqbassa} and \ref{teorema1}.

In Section \ref{sec:simulazione} we summarize these results and we
show how to obtain an algorithm for the perfect simulation of the
random cluster measure. We give the details only in the case of low
temperature which is the most difficult and interesting. The case of
high temperature can be treated in a similar way. As a byproduct we
also obtain the uniqueness of the random cluster measure at low or
high temperature, which, al least in the case of $\mathbb L^d$ is
well known (see \cite{Grimmett95,Grimmett}).

\begin{comment}

to prove our main result on the uniqueness of the random cluster

measure for \emph{small} or \emph{large} $p$. It is believed, for

the d-dimensional cubic lattice $\mathbb{L}^d$, that for small

values of the parameter $q $ the random cluster measure is unique

and there exists a value $ Q = Q(d) $ such that for $ q < Q $ the

random cluster measure is unique for any value of the parameter $p =

p_e =const.$ while the uniqueness is lost, in only one point of the

parameter, for $q>Q$ (see \cite{Grimmett} p. 242). We do not deal

with this conjecture that is beyond our methods but we develop some

techniques that are related to ideas of perfect simulation and

coupling from the past that can be applied to general random cluster

measures. We believe that an advantage of our method of proof is

that can be easily rewritten in form of algorithm allowing the

perfect simulation of the random cluster measure on any finite set.

To study the uniqueness we introduce a dynamics that differs a

little from the one given in \cite{Grimmett} Chapter~8. In our case

we define a discrete dynamics that is related to the construction of

the algorithm for the simulation of the random cluster measure. The

dynamics gives a coupling of all the cluster random measures,

constructed on the infinite graph, obtained from the DLR equations

with the same parameters.

\end{comment}

In the last section we analyze the connection between the perfect
simulation of the random cluster measure and the Potts model. We
construct  an algorithm to simulate  the Potts model with free
boundary conditions at low or high temperatures.

%the

%difference depends on the detection of the infinite cluster that in

%the first case is the same behavior of the others cluster and in the

%constant boundary conditions its color should be the same of the

%boundary. Obviously we can not detect the infinite cluster with

%absolute certainty, and this creates a small error.

We also refer to some literatures on perfect simulation. In
\cite{PW} was introduced the perfect simulation algorithm for Markov
chains. If the chain is ergodic with this algorithm it is possible
to simulate the unique stationary measure associated with the chain.
This paper has started some new research fields. One area of
research concerns the Markov fields (see \cite{HS00,DP08}); a second
one concerns the processes with infinite memory (see
\cite{CFF,DP2,Gallo}). Recently, these two areas of research have
been in some sense unified by studying Gibbs measures with infinite
interaction range (see \cite{GLO,DLiss}). Our paper is included in
the latter context.

%The dynamic that we introduce to construct the coupling from the

%past has a kernel that is not continuous therefore there is also

%some connection with \cite{DP2,Gallo}.

%%%%%%%%%%%%%%%%%%%%%%%%%%%%%%%%%%%%%%%%%%%%%%%%%%%%%%%%%%%%%%%%%%%%%%%%%%%%%%%%%%%%%%%%%

%%%%%%%%%%%%%%%%%%%%%%%%%%%%%%%%%%%%%%%%%%%%%%%%%%%%%%%%%%%%%%%%%%%%%%%%%%%%%%%%%%%%%%%%%

%%%%%%%%%%%%%%%%%%%%%%%%%%%%%%%%%%%%%%%%%%%%%%%%%%%%%%%%%%%%%%%%%%%%%%%%%%%%%%%%%%%%%%%%%

%%%%%%%%%%%%%%%%%%%%%%%%%%%%%%%%%%%%%%%%%%%%%%%%%%%%%%%%%%%%%%%%%%%%%%%%%%%%%%%%%%%%%%%%%

%%%%%%%%%%%%%%%%%%%%%%%%%%%%%%%%%%%%%%%%%%%%%%%%%%%%%%%%%%%%%%%%%%%%%%%%%%%%%%%%%%%%%%%%%

\section{Some notations on graphs} \label{notation}

In this section we recall some definitions on graphs that will
be used in the sequel. %and we define what is a tame graph.

In this paper a graph will be a collection of two sets, $V$ called
the set of vertexes and $E$ called the set of edges, and of a map
from $E$ to the set of unordered pairs of different elements of $V$.
The pair associated to an edge $e$ are called the end vertexes of
$e$ and the two vertexes are said to be \emph{adjacent}. As it is
common in the literature we will denote a graph by $(V,E)$. A
\emph{path} in $G$ joining the vertexes $u$ and $v$ is a sequence
$e_1,\dots,e_m$ in $E$ such that $e_i$ and $e_{i+1}$ have a common
vertex, $u$ is an end vertex of $e_1$ and $v$ is an end vertex of
$e_m$. The integer $m$ is called the length of the path. Two
vertexes are said to be in the same connected component if there is
a path joining them. The graph-distance of two vertexes $u$ and $v$,
denote with $\delta_G (u,v)$, is the length of a minimal path of
joining them, and it is infinite if the two vertexes are in
different connected components. We denote by $B_G(v,r)$ the ball of
center $v$ and ray $r$ with respect to this distance.

%For a connected graph $G$ we say that it is \emph{tame} if

%$\lim_{\ell\lra\infty} \card(B_G(u,r))/e^{\gra r} = 0$ for all

%$\gra>0$. Notice that this definition does not depend on the choice

%of the center $u\in V$.

%%%%%%%%%%%%%%%%%%%%%%%%%%%%%%%%%%%%%%%%%%%%%%%%%%%%%%%%%%%%%%%%%%%%%%

%%%%%%%%%%%%%%%%%%%%%%%%%%%%%%%%%%%%%%%%%%%%%%%%%%%%%%%%%%%%%%%%%%%%%%

\section{The random cluster measure}\label{sec:misure}

In this section we define the random cluster measure introduced by
Fortuin and Kasteleyn as explained in the book of Grimmett
\cite{Grimmett}. Since our setting will be slightly more general
than the one exposed by Grimmett we give the construction of the
measure. However all the arguments given in his book easily
generalize to our setting, so we refer to \cite{Grimmett}, Chapter
4, for the details.

\subsection{Construction as thermodynamic limit}

For our constructions we fix a graph $G=(V,E)$. We further assume
that it is countable of finite degree, meaning that $V$ is countable
and that every vertex is an end vertex of a finite number of edges.

Set $\Omega=\{0,1\}^E$ and let $\mathcal F$ be the $\sigma$-algebra
generated by finite cylinders. If $\omega \in \Omega$ we denote by
$E(\omega)$ the set of the elements $e \in E$ such that $\omega_e
=1$. To define a random cluster measure we also fix parameters $\ppp
=(p_e \in [0,1]: e\in E)$, and $q\in (0,\infty)$. For simplicity in
this paper we will assume $q\geq 1$ which is the more significant
for the application to statistical mechanics.

There are two ways of defining random cluster measure on $G$. The
first method is as limit on finite subgraph and is called the
thermodynamic limit. The second method is by giving the conditional
probabilities on finite subgraph and it is called the
Dobrushin-Lanford-Ruelle or DLR method. We now explain briefly the
construction as thermodynamic limit.

Given $\xi \in \Omega$ and $F\subset E$ a finite set, let
$\Omega^{\xi}_{F}= \{\omega \in \Omega : \omega_e=\xi_e$ for all $e
\not\in F \}$. We define the measure $\phi^\xi _{F,\ppp,q}$ on
$\Omega$ by:
\begin{equation}\label{FKmeasure} \phi^\xi _{F,\ppp,q}(\omega) =
\begin{cases}
\frac{1}{Z_{\xi,F}} \left [ \prod_{e
\in F} p_e^{\omega_e} (1-p_e)^{1- \omega_e} \right ]q^{ k(\omega,F)} &\text{if }
\omega\in \Omega^\xi_F, \\
0  &\text{otherwise},
\end{cases}
\end{equation}
where $k(\omega,F)$ is the number of connected components of the
graph $(V,E(\omega))$ that intersects $F$ and $Z_{\xi,F}$ is just
the normalizing constant.

Following \cite{Grimmett} Definition 4.15 we say that a probability
$\phi$ on $(\Omega,\mathcal F)$ is a \emph{limit random cluster
measure} if there is a sequence $(\xi_n,F_n)$ such that $\phi$ is
the weak limit of the measures $\phi^{\xi_n}_{F_n,\ppp,q}$ and we
denote by $\mathcal W_{\ppp,q}$ the set of these measures.

If we fix $\xi_n$ to be constantly equal to $1$ (resp.\ to $0$) the
limit of the measures $\phi^{\xi_n}_{F_n,\ppp,q}$ exists, and it
does not depend on the choice of the sequence $F_n$ (see
Theorem~4.19 in \cite{Grimmett}). This limit will be denoted by
$\phi_{\ppp,q}^1$ (resp.\ $\phi_{\ppp,q}^0$). Moreover, for all
$\phi \in \mathcal W_{\ppp,q}$, we have
$$
\phi_{\ppp,q}^0\leq_{st} \phi\leq_{st} \phi_{\ppp,q}^1.
$$
where $\leq_{st}$ is the \emph{standard stochastic order} as in
\cite{Grimmett} Section 2.1.

Another important property of these measures is the so called finite
energy property. Let
\begin{equation}\label{eq:phat}
\hat p_e = p_e/(p_e+q(1-p_e)), \end{equation} then for all $\phi
\in\mathcal W _{\ppp,q}$ and $e\in E$, we have
$$
\hat p_e \leq \phi(L_e|\mathcal T _e) (\omega)\leq p_e,\qquad a.s.
$$
where $L_e =\{\omega: \omega_e=1\}$, $\mathcal T_e$ is the
$\sigma$-algebra generated by the finite cylinders with base
contained in $E\setminus\{e\}$.

\subsection{DLR construction}

In the case of a finite graph $G=(V,E)$ the definition given above
furnishes a unique measure on $(\Omega,\mathcal F)$ which is
characterized by the conditional probability of $\omega_e=1$ given
the values of $\omega$ in $E\setminus\{e\}$. In this model these
probabilities depend on the existence of a path in
$E(\omega)\setminus\{e\}$ joining the two end vertexes of the edge
$e$.  Let $K_e$ be the set of configurations $\omega$ having this
property.

In the case of infinite graph this property can be formalized as
follows:
\begin{equation}\label{DLR}
\phi(L_e|\mathcal T _e)(\omega) =
\begin{cases}
p_e &\text{if } \omega \in K_e, \\
\hat p_e  &\text{if } \omega \not \in K_e.
\end{cases}
\end{equation}
A probability $\phi$ on $(\Omega,\mathcal F)$ is a \emph{DLR random
cluster measure} if it satisfies equation \eqref{DLR} for all $e\in
E$. We denote by $\mathcal R_{\ppp,q}$ the set of these measures.

In the infinite setting the two definitions of random cluster
measure are not always equivalent. However (see \cite{Grimmett},
Chapter 4, Section 4) it is known that $\mathcal R_{\ppp,q}$ is not
empty, in particular $\phi^0_{\ppp,q}$ and $\phi^1_{\ppp,q}$ are
elements of $\mathcal R_{\ppp,q}$ and for all $\phi \in \mathcal
R_{\ppp,q}$ one has
$$
\phi_{\ppp,q}^0\leq_{st} \phi\leq_{st} \phi_{\ppp,q}^1.
$$
In particular notice that
$$
\card(\mathcal W_{\ppp,q})=\card(\mathcal R_{\ppp,q})=1 \quad \text{
if and only if } \quad \phi_{\ppp,q}^0=\phi_{\ppp,q}^1.
$$

%%%%%%%%%%%%%%%%%%%%%%%%%%%%%%%%%%%%%%%%%%%%%%%%%%%%%%%%%%%%%%%%%%%%%%%%%

%%%%%%%%%%%%%%%%%%%%%%%%%%%%%%%%%%%%%%%%%%%%%%%%%%%%%%%%%%%%%%%%%%%%%%%%%

\section{Construction of the dynamics}\label{sec:dinamica}

In this paper, following the literature, to give a sample of a
measure on $(\Omega,\calF)$ we introduce families of auxiliary
random variables $(u_e)_{e\in E}$ that are independent and uniformly
distributed on $[0,1]$, which in the algorithm we present in
Sections \ref{sec:simulazione} and \ref{sec:applicazioni} can be
thought as the output of a pseudorandom function on a computer.

We now define a stochastic dynamic such that the measures in
$\mathcal R_{\ppp,q}$ are invariant. A similar dynamics was already
considered by Grimmett in \cite{Grimmett95}.

Let $G=(V,E)$ be a countable graph of finite degree and we choose an
order for its edges so that $E=\{ e_1, e_2 , \ldots\}$.

%For a negative number $N$ we define $\calA_N = \{(n,k)\in \mZ \times

%\mZ: -1\geq n\geq N$ and $k\geq 1\}$ (resp.\ $\calA=\bigcup_N

%\calA_N$) and $\calU_N = [0,1]^{\calA_N}$ (resp.\ $\calU =

%[0,1]^{\calA}$). Notice that we have natural projections from

%$\calU$ to $\calU_N$ and from $\calU_M$ to $\calU_N$, for $M<N$,

%that we denote by $u\mapsto u_N$. On $\calU$ and $\calU_N$ we put

%the Lebesgue product measure so that the coordinates $u_{n,k}$ are

%i.i.d.\ random variables having uniform distribution on $[0,1]$.  We

%define also $\tilde \calA_N = \calA_N \cup\{(n,0)\st n=N,\dots,0\}$

%and $\tilde \calA = \bigcup_N \tilde \calA_N$.

For a negative number $N$ we define $\calA_N = \{(n,k)\in \mZ \times
\mZ: -1\geq n\geq N$ and $k\geq 1\}$ (resp.\ $\calA=\bigcup_N
\calA_N$) and $\calU = [0,1]^{\calA}$.  On $\calU$  we put the
Lebesgue product measure so that the coordinates $u_{n,k}$ are
i.i.d.\ random variables having uniform distribution on $[0,1]$. We
define also $\tilde \calA_N = \calA_N \cup\{(n,0)\st n=N,\dots,0\}$
and $\tilde \calA = \bigcup_N \tilde \calA_N$.

%For a fixed $N<0$ and for a fixed $X_{N,0}:\calU \lra \Omega$ let

%$(X_{n,k}:\calU \longrightarrow \Omega$ such that $(n,k) \in \tilde

%\calA_N)$ be a process with values in $\Omega$ constructed in the

%following way: given $(n,k)\in \tilde \calA _N$

For a fixed $N<0$ and for a fixed $X_{N,0}:\calU \lra \Omega$ let
$(X_{n,k}:\calU \longrightarrow \Omega$ such that $(n,k) \in \tilde
\calA_N)$ be a process with values in $\Omega$ constructed in the
following way: given $(n,k)\in \tilde \calA _N$
\begin{equation}\label{eqdefX}
(X_{n,k+1}(u))_e =
\begin{cases}
(X_{n,k}(u))_e &\text{if } e \neq e_{k+1}, \\
1 & \text{if } u_{n,k+1} < p_e,\; e = e_{k+1}\text{ and } X_{n,k}(u)
\in K_e, \\
1 & \text{if }u_{n,k+1} < \hat p_e,\; e = e_{k+1}\text{ and }
X_{n,k}(u)
\notin K_e, \\
0 & \text{otherwise.}
\end{cases}
\end{equation}
Furthermore notice that for all $n$ there exists the limit $X_{n,k}$
for $k$ going to infinity. We construct $X_{n+1,0}$ as this limit.
We call such a process an $FK^N_{\ppp,q}$-process.

All DLR random cluster measures are invariant under this process.
Indeed if $\phi \in \calR_{\ppp,q}$ and $X_{n,k}$ has law $\phi$
then $X_{n,k+1}$ has the same law. Hence, for $h > k$ and $m=n$, or
for $m>n$, $X_{m,h}$ has law $\phi$.

%Finally, for any integer $N<0$ and any $\omega\in \Omega$ we denote

%with $X_{n,k}^{(\omega,N)}$ the $FK^N_{\ppp,q}$-process constructed

%starting with $X_{N,0}(u)=\omega$ for $u\in \calU_N$.

Finally, for any integer $N<0$ and any $\omega\in \Omega$ we denote
with $X_{n,k}^{(\omega,N)}$ the $FK^N_{\ppp,q}$-process constructed
starting with $X_{N,0}(u)=\omega$ for $u\in \calU$.

\medskip

In the main result of this paper, under suitable assumption on the
parameters $\ppp ,q$ and on the graph $G$, given a finite $F \subset
E$ and $u\in \calU$ we will show how to determine a integer $N$ such
that $(X^{(\omega,N)}_{0,0}(u))_e$ does not depend on $\omega \in
\Omega$ for all $e\in F$. Moreover we show how to determine a finite
region $F'\supset F$ such that $(X^{(\omega,N)}_{0,0}(u))_e$ does
not depend on the values of $u_{n,k}$ for $e_k\notin F'$.

Since the $\calR_{p,q}$-measures are invariant in this way we prove
that they are all equal and we give a perfect simulation of them on
any finite subset of $E$. In particular we prove
$\calR_{p,q}=\calW_{p,q}=\{\phi_{p,q}^0\}$.

%%%%%%%%%%%%%%%%%%%%%%%%%%%%%%%%%%%%%%%%%%%%%%%%%%%%%%%%%%%%%%%%%%%%%%%%%%%%%%%%%%%%%%%%%%%%%%%%

%%%%%%%%%%%%%%%%%%%%%%%%%%%%%%%%%%%%%%%%%%%%%%%%%%%%%%%%%%%%%%%%%%%%%%%%%%%%%%%%%%%%%%%%%%%%%%%%

%%%%%%%%%%%%%%%%%%%%%%%%%%%%%%%%%%%%%%%%%%%%%%%%%%%%%%%%%%%%%%%%%%%%%%%%%%%%%%%%%%%%%%%%%%%%%%%%

%%%%%%%%%%%%%%%%%%%%%%%%%%%%%%%%%%%%%%%%%%%%%%%%%%%%%%%%%%%%%%%%%%%%%%%%%%%%%%%%%%%%%%%%%%%%%%%%

%%%%%%%%%%%%%%%%%%%%%%%%%%%%%%%%%%%%%%%%%%%%%%%%%%%%%%%%%%%%%%%%%%%%%%%%%%%%%%%%%%%%%%%%%%%%%%%%

\section{Simplicial graph}\label{sec:grafisimpliciali}

In this section we define the notion of simplicial graph and we
prove some geometric properties of these graphs. It is possible that
these results are already known, maybe with different notations,
however we could not find a reference.

The prototypical graph we have in mind is the graph $\mL^d$ whose
vertices's are the elements of $\mZ^d$ and whose edges are the
segments of length one joining them. More in general a simplicial
graph will be the graph obtained by considering vertices's and
segment of a polyhedral tessellation of $\mR^d$.

Before giving the details we explain roughly the problem we want to
consider in the next sections. Let $G=(V,E)$ be a graph and color
each edge of $G$ white or back in a random way. Now consider a
finite subset $F$ of $E$. We want to determine a region $\bar H$
containing $F$ such that for all $e \in F$ if there is a path of
black edges joining the two end vertexes of $e$ then there is a path
contained in $\bar H$ of black edges joining the two end vertexes of
$e$ (see Theorem~\ref{teorema1}). Moreover we want $\bar H$ to be
small as possible so that if the probability of an edge to be black
is high then if $F$ is finite then $\bar H$ is also almost surely
finite (see Theorem~\ref{teo:pqbassa}). We construct first a set $H$
by adding inductively to $F$ white edges until its ``boundary'' is
entirely composed by black edges and then we set $\bar H$ to be
equal to the union of $H$ with its boundary. The idea is that, in
this way, if two points are in the boundary of $\bar H$, then they
can be connected by a path contained in the boundary and in
particular of black edges (for the precise statement see Proposition
\ref{prp:contorno2}). In this way when a path of black edges has its
end vertexes in $F$ can be replaced by a path of black edges with
the same end vertexes and contained in $\bar H$ by replacing the
pieces outside $\bar H$ with paths along the boundary. A first try
could be to construct $H$ by adding edges to $F$ until there are
white edges connected by a vertex to the set. In this way $H$ would
be the union of $F$ with the connected components of the subgraph of
white edges having non trivial intersection with $F$. However it is
immediate to see that this $H$ has not the required properties. For
a simplicial graph we can construct $H$ replacing the condition ``to
have a vertex in common'' with a different condition. We construct
$H$ by adding to $F$ white edges until there are white edges in the
``boundary'' of this set. The correct notion of ``boundary'' is
defined in Section \ref{ssec:poliedrali}, now we explain it in the
case of the simplicial graph $\mL^d$. We say that an edge $e'$ is in
the ``boundary'' of an edge $e$ if they are different and if there
exists a $d$-dimensional hypercube of side $1$ with vertexes in
$\mZ^d$ containing both $e$ and $e'$. In this section we prove that
this notion of boundary has the required geometrical properties (see
Proposition \ref{prp:contorno2}).

\subsection{Definitions}

Let $A$ be a closed convex subset of $\mR^\ell$ which is the
intersection of a finite number of closed half-spaces. Such a set
will be called a \emph{convex cell} and will be the starting point
of our constructions. For such a set we can identify the subset of
vertexes, edges and $i$-dimensional faces and we denote by $A_i$ the
set of $i$-dimensional faces of $A$.

\begin{comment}

 as follows. If $H$ is an affine

hyperplane (affine subspace of dimension $\ell-1$) of $\mR^\ell$

such that $A$ is contained in the closure of one of the two

half-space of $\mR^\ell$ determined by $H$ and $B=H\cap A$ is a non

empty subset of dimension $i$ (by dimension we can take here the

dimension of the affine space generated by $B$ or, equivalently, the

topological dimension) we say that $B$ is a $i$\emph{-dimensional

face} or $i$-face of $A$. We denote with $A_i$ the collection of

$i$-faces. If the dimension of $A$ is $m$ we say that $A$ is a

$m$-cell of $A$ and we set $A_m=\{A\}$.

\end{comment}

In some constructions will be useful to have a more general notion
of cell. If $A$ is a convex cell of dimension $m$ and $\grf:A\lra
\mR^d$ is a piecewise affine continuous injective map we call the
image $\grs$ of $A$ a \emph{cell} of $\mR^d$ of dimension $m$.  We
define also the collections $\grs_i=\{\grf(B)\st B\in A_i\}$ and the
datum of $\grs_0,\dots,\grs_m$ will be called a \emph{polyhedron}.

The assumption piecewise affine on $\grf$ could be highly relaxed,
however this assumption makes some of the arguments below more
elementary and does not change the generality of our applications.

A \emph{polytope} $\calP$ in $\mR^d$ is a collection of cells in
$\mR^d$ which intersect properly. More precisely $\calP$ is the
datum of sets $P_0,P_1,\cdots,P_m$ such that

\begin{enumerate}[\indent $i$)]
\item the elements of $P_i$ are $i$-dimensional cells of $\mR^d$;
\item $P_i$ is locally finite: this means that for all bounded regions $R$ of $\mR^d$
$\grs \cap R = \vuoto$ for  all $\grs \in P_i$ but a finite number;
\item if we denote by $P_j(\grs) = \{\tau \in P_j\st \tau \subset \grs
  \}$, for all $\grs \in P_i$ and all $j\leq i$, then the collection
  $$ \calP(\grs)=\big\{P_0(\grs),\dots,P_{i}(\grs)\big\} $$
  is a polyhedron and the set $P_i(\grs)$ will be called the set of $i$-faces of $\grs$;
\item for all $\grs\in P_i$ and $\tau \in P_j$ the intersection
  $\grs\cap\tau$ is either empty or a union of faces of $\grs$ and $\tau$.
\end{enumerate}

The elements of $P_i$ will be called the $i$-cells of $\calP$ and in
particular we will call $P_0$ (resp.\ $P_1$) the set of vertexes
(resp. of edges) of $\calP$. The union of all the cells of $\calP$
will be called the $\emph{support}$ of $\calP$ and will be denoted
by $\supp{\calP}$.

The graph $G(\calP)=(V,E)$ associated to $\calP$ is defined as
follows: $V=P_0$, $E=P_1$ and the end vertexes of an edge $e\in E$
is the pair of vertexes contained in $e$. Notice that if $x$ and $y$
are two vertexes of $\calP$ then they are in the same connected
component of $\supp{\calP}$ if and only if they are in the same
connected component of $G(\calP)$.

The simplest possible convex cell are the simplexes. The $\ell$
dimensional standard simplex is the set $S=\{(x_0,\dots,x_\ell)\in
\mR^{\ell+1}\st x_i\geq 0$ for all $i$ and $\sum_i x_i=1\}$. A
$\ell$--dimensional \emph{convex simplex} (resp. an
$\ell$--dimensional \emph{simplex}) is the image of $S$ under an
affine (resp. piecewise affine and continuous) injective map. We
notice that every cell can be obtained as the support of a polytope
whose cells are simplexes.

A refinement of a polytope $\calP$ is a polytope $\calP'$ such that
$\supp{\calP}=\supp{\calP'}$ and each cell of $\calP'$ is contained
in a cell of $\calP$.

\subsection{Internal and external part of codimension one smooth polytopes}

We say that a polytope $\calC$ in $\mR^d$ is \emph{smooth} if its
support is smooth as a topological variety. In this case this means
that for every $x \in \supp{\calC}$ there exists a natural number
$j$, a neighborhood $W$ of $x$ in $\mR^d$ an open ball $W'$ of
$\mR^d$ and a piecewise affine continuous map $\psi: W \lra W'$
which defines an homeomorphism between $W$ and $W'$, such that
$\psi(x)=0$ and $\psi(W\cap \supp{\calC})= \{(t_1,\dots,t_n)\in W'
\st t_{1}=\dots=t_j=0\}$. If, moreover all the connected components
of $\supp{\calC}$ have dimension $n-1$ we say that it is a smooth
polytope of codimension one. Notice that if $\calC$ is smooth of
codimension one in $\mR^d$ then for all $x$ in $\supp{\calC}$ there
exists a neighborhood $W$ of $x$ in $\mR^d$ which is divided by
$\supp{\calC}$ into two open connected components. Now we give a
more global construction of these components.

Let $\calC$ be a codimension one smooth polytope in $\mR^d$ with a
finite number of vertexes and let $U$ be the complement of
$\supp{\calC}$. For all $x \in U$ we consider the set
$\sfS_x(\calC)$ of half-lines $\ell$ starting in $x$ and whose
intersection with $\supp{\calC}$ is generic. More precisely we
require that for all cells $\grs$ of $\calC$ if $\ell\cap \grs \neq
\vuoto$ then $\grs \in P_{d-1}$ and $\ell\cap \grs$ is a finite set
and moreover this intersection is contained in the set of points of
$\grs$ that are linearly smooth: for all $y\in \ell \cap \grs$ there
exists a neighborhood $U$ of $y$ and an hyperplane such that
$\grs\cap U = U\cap H$. Then the parity of the cardinality of $\ell
\cap \supp{\calC}$ does not depend on $\ell \in \sfS_x(\calC)$.
Moreover this parity is locally constant on $x\in U$.

Hence we define the \emph{internal part} of $\calC$, that we will
denote by $\Int \calC$ as the set of points $x\in U$ such that this
cardinality is odd and the \emph{external part}, that we will denote
by $\Est \calC$ as the set of point such that this cardinality is
even. Notice that $\Int \calC$ and $\Est \calC$ are two open subsets
of $\mR^d$ with boundary equal to $\supp{\calC}$. In particular for
all path joining an element of $\Int \calC$ with an element of $\Est
\calC$ the intersection of $\gamma$ with $\supp{\calC}$ is not
empty. Finally notice that $\Int \calC$ is bounded.

Notice that for all $x \in \supp{\calC}$ if $W$ is a neighborhood of
$x$ as in the beginning of this section, then the two connected
components of $W \senza \supp{\calC}$ are the intersection of $W$
with $\Int \calC$ and $\Est \calC$.  We say that a path $\gamma$
cross $\calC$ in $x$ if $\gamma(t_0)=x$ for some $t_0$ and there
exists sequences $\{t_n\}$ and $\{s_n\}$ going to $t_0$ such that
$\gamma (s_n) \in \Est \calC$ and $\gamma (t_n) \in \Int \calC$ for
all $n$.

\begin{lemma}\label{prp:dentrofuori} Let $\calC$ be a codimension
one smooth polytope in $\mR^d$ with a finite number of vertexes.
Then for all $x,y\in \supp{\calC}$ such that there exists a path in
$\overline {\Int \calC}$ joining $x$ and $y$, and a path in
$\overline {\Est \calC}$ joining $x$ and $y$, then $x$ and $y$ are
in the same connected component of $\supp{\calC}$. In particular if
$x,y$ are vertexes then they are in the same connected component of
$G(\calC)$.
\end{lemma}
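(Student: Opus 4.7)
The plan is to argue by contradiction: assume $x$ and $y$ lie in different connected components of $\supp{\calC}$, let $C_x$ be the component containing $x$, and show that the two path hypotheses cannot both be satisfied. The key geometric input is that each connected component of $\supp{\calC}$ separates $\mR^d$ into a well-defined ``interior'' and ``exterior'' side with respect to $\calC$.

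More precisely, I would first set up the topology of $C_x$. Since $\calC$ has finitely many vertexes and is codimension-one smooth, $C_x$ is a compact $(d-1)$-dimensional topological manifold without boundary embedded in $\mR^d$. By the Jordan--Brouwer separation theorem, $\mR^d\senza C_x$ has exactly two connected components $A_x$ and $A'_x$. Choosing a neighborhood $N$ of $C_x$ small enough to be disjoint from all other components of $\supp{\calC}$, the two connected pieces $A_x\cap N$, $A'_x\cap N$ are open connected subsets of $\mR^d\senza\supp{\calC}$, and hence each lies entirely in $\Int\calC$ or entirely in $\Est\calC$. The codimension-one local structure forces them to carry opposite labels, so after relabeling I may assume $A_x\cap N\subset\Int\calC$ and $A'_x\cap N\subset\Est\calC$.

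The core step is then a trapping argument: any continuous path $\gamma\colon[0,1]\to\overline{\Int\calC}$ with $\gamma(0)=x\in C_x$ must satisfy $\gamma([0,1])\subset\overline{A_x}$. Indeed, setting $t_0=\inf\{t\st\gamma(t)\notin\overline{A_x}\}$, continuity forces $\gamma(t_0)\in\partial A_x\subset C_x$, and any sequence $t_n\downarrow t_0$ with $\gamma(t_n)\in A'_x$ must eventually lie in $A'_x\cap N\subset\Est\calC$, contradicting $\gamma(t_n)\in\overline{\Int\calC}=\Int\calC\cup\supp{\calC}$, which is disjoint from $\Est\calC$. Running the same argument on the path in $\overline{\Est\calC}$ forces it into $\overline{A'_x}$. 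Since $y$ is the common endpoint, $y\in\overline{A_x}\cap\overline{A'_x}=C_x$, contradicting $y\notin C_x$.

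For the ``in particular'' clause, once $x$ and $y$ are in the same connected component $C$ of $\supp{\calC}$, I would pick a continuous arc in $C$ joining them. By local finiteness this arc meets only finitely many cells $\grs_1,\dots,\grs_m$ with $x\in\grs_1$, $y\in\grs_m$, and consecutive cells meeting non-trivially, hence (by property $iv$) sharing at least a vertex; since the $1$-skeleton of each convex cell is connected, this transfers via the piecewise-affine parametrization to the $1$-skeleton of each cell of $\calC$, and concatenation yields an edge path in $G(\calC)$ from $x$ to $y$. The main obstacle is the trapping step: one must exclude the possibility that $\gamma$ runs along $\supp{\calC}$ and re-enters on the opposite side of $C_x$, and the choice of $N$ small enough to meet $\supp{\calC}$ only in $C_x$ is exactly what licenses the identification ``being on the $A'_x$-side close to $C_x$'' $=$ ``lying in $\Est\calC$''.
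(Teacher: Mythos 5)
Your proposal is correct and takes essentially the same approach as the paper: both isolate the connected component of $\supp{\calC}$ through $x$ as a separating compact codimension-one polytope, establish (via Jordan--Brouwer in your case, via the parity-based $\Int\calD/\Est\calD$ machinery applied to the subpolytope $\calD$ in the paper's case) that each of the two hypothesized paths is trapped in the closure of one side, and conclude that the common endpoint $y$ lies on that component. Your trapping step and your sketch of the reduction to the $1$-skeleton for the ``in particular'' clause simply spell out details the paper's proof leaves implicit in the phrase ``$\gra$ and $\grb$ never cross $\calD$.''
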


\begin{comment}
\begin{lemma}\label{prp:dentrofuori}

Let $\calC$ be a codimension one smooth polytope in $\mR^d$ with a

finite number of vertexes. Then for all

\begin{enumerate}

\item Se $x$ e $y$ sono due vertici di $\calC$ che stanno nella stessa

  componente connessa di $\supp{\calC}$ allora esiste un cammino che unisce $x$ e

  $y$ lungo i lati di $\calC$.

\item Se $x,y\in \supp{\calC}$ e esiste un cammino in $\overline {\Est \calC}$ tale che collega

  $x,y$ e esiste un cammino in $\overline {\Int \calC}$ che collega $x,y$ allora $x,y$

  stanno nella stessa componente connessa di $\supp{\calC}$.

\end{enumerate}

\end{lemma}

\end{comment}

\begin{proof}
Let $\gra$ (resp.\ $\grb$) be a path joining $x$ and $y$ in
$\overline {\Int \calC}$ (resp.\ $\overline {\Est \calC}$). Consider
the connected component $D$ of $\supp{\calC}$ containing $x$ and let
$\calD$ be the polytope whose cells are the cells of $\calC$
contained in $D$. $\calD$ is a connected codimension one smooth
polytope with a finite number of vertexes. Notice that, by what
noticed above, in a small neighborhood $W$ of $D$ we have that
$W\senza D$ has two connected components $W_1$ and $W_2$ and we have
$W_1=W\cap \Int \calC$ and $W_2=W\cap \Est \calC$. Similarly $W_1$
and $W_2$ must be the intersection with $\Int \calD$ and $\Est
\calD$, and both the possibilities
$$
\begin{cases}
W_1=W\cap \Int \calC \\
W_2=W\cap \Est \calC
\end{cases}
\qquad \mand
\begin{cases}
W_1=W\cap \Est \calC \\
W_2=W\cap \Int \calC
\end{cases}
$$
 can occur. Since $\gra$ and $\grb$ do never cross $\calC$ they
also never cross $\calD$ and we have that $\gra$ is contained in
$\overline \Int \calD$ and $\grb$ is contained in $\overline \Est
\calD$ or the opposite.

Hence the final point $y$ of $\gra$ and $\grb$ belongs to $\overline
{\Int \calD}\cap \overline {\Est \calD}$ hence it is in $D$.
\end{proof}

\subsection{Polyhedral tessellation and simplicial graph}\label{ssec:poliedrali}

We say that a polytope $\calP$ is a \emph{polyhedral tessellation}
of $\mR^d$ if $\supp{\calP}=\mR^d$. In this case we say that
$G=G(\calP)$ is a \emph{simplicial graph}.

If $A\subset P_0$ we define the \emph{boundary} $\Delta_\calP(A)$ of
$A$ as the set of vertexes $v \in P_0\senza A$ for which there
exists, $w\in A$ and a cell $\grs$ of $\calP$, such that $v,w\in
\grs$ and we define $G(A)$ as the graph whose set of vertexes is
equal to $A$ and whose edges are given by the element in $P_1$
joining two elements of $A$. Notice also that if $x\in A$ and $y\in
P_0\senza A$ any path in $G(\calP)$ joining $x$ and $y$ intersects
$\Delta_\calP(A)$.

\begin{prop}\label{prp:contorno1}
Let $\calP$ be a polyhedral tessellation of $\mR^d$ and let $V$ be
its set of vertexes. Let $A$ be a finite subset of $V$ and set
$B=V\senza A$.  Let $x,y \in A$ adjacent respectively to $x', y'\in
B$. Assume now that $x$, $y$ are connected in $G(A)$ and that $x'$,
$y'$ are connected in $G(B)$.  Then $x',y'$ are connected in
$G(\Delta_\calP(A))$.
\end{prop}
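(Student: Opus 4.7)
The natural approach is to apply Lemma~\ref{prp:dentrofuori} to a codimension-one smooth polytope $\calC$ obtained as the boundary of a suitable thickening of $A$. First, I would pass to a refinement $\calP'$ of $\calP$ (for instance its barycentric subdivision) in which every vertex $v$ of $\calP$ has a well-defined closed dual cell $D_v$, with $D_v$ a $d$-dimensional PL ball, $v \in \Int D_v$, and $D_v \cap D_w$ a union of common faces of $\calP'$ whenever $v \neq w$. Then set $K := \bigcup_{v \in A} D_v$, a finite union of cells of $\calP'$, and take $\calC$ to be the polytope whose support is the topological boundary $\partial K$.

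The geometric content of the construction is the claim that $\calC$ is a smooth codimension-one polytope with finitely many vertexes satisfying: (i) $A \subset \Int \calC$ and every edge of $G(A)$ is contained in $\overline{\Int \calC}$; (ii) every edge of $G(B)$ is contained in $\overline{\Est \calC}$; (iii) for each $v \in \Delta_\calP(A)$ the intersection $D_v \cap \supp{\calC}$ is non-empty and connected, and two such vertexes $u,v$ are connected in $G(\Delta_\calP(A))$ iff $D_u \cap \supp{\calC}$ and $D_v \cap \supp{\calC}$ lie in the same connected component of $\supp{\calC}$.

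Granting these properties, the proof concludes quickly. Let $x^*$ be the single point of $\{x,x'\} \cap \partial K$ and define $y^*$ analogously. The concatenation of the segment from $x^*$ to $x$, the $G(A)$-path from $x$ to $y$, and the segment from $y$ to $y^*$ is a curve in $\overline{\Int \calC}$ joining $x^*$ and $y^*$; routing instead via $x'$, the $G(B)$-path, and $y'$ produces a curve in $\overline{\Est \calC}$ by (ii). Lemma~\ref{prp:dentrofuori} then gives that $x^*$ and $y^*$ lie in the same connected component of $\supp{\calC}$, and (iii) translates this back into the required path in $G(\Delta_\calP(A))$ from $x'$ to $y'$.

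The main obstacle is the geometric construction itself. A naive choice such as $K = \bigcup \{ \grs \in \calP : \grs \cap A \neq \varnothing \}$ can produce a singular boundary — e.g.\ in $\mL^2$ with $A = \{(0,0),(2,2)\}$ the two $2\times 2$ blocks of unit squares meet only at the corner $(1,1)$ — so the smoothness hypothesis of Lemma~\ref{prp:dentrofuori} fails. The dual-cell construction in a barycentric subdivision is tailored to avoid this: distinct dual cells meet along full $(d-1)$-dimensional faces of $\calP'$, and the containment $v \in \Int D_v$ forces every edge of $G(B)$ to remain outside $\Int K$, securing (ii). Relating the combinatorics of $\supp{\calC}$ to that of $G(\Delta_\calP(A))$ in (iii) is then largely bookkeeping in the dual decomposition.
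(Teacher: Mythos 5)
Your strategy---build a codimension-one smooth polytope $\calC$ separating $A$ from $B$, route the $G(A)$- and $G(B)$-paths through $\overline{\Int\calC}$ and $\overline{\Est\calC}$, and invoke Lemma~\ref{prp:dentrofuori}---is exactly the paper's. The difference is in the construction of $\calC$, and there the proposal has a genuine gap. You take $K=\bigcup_{v\in A}D_v$ with $D_v$ a dual cell of $v$, propose $\calC$ with support $\partial K$, and justify smoothness by asserting that ``distinct dual cells meet along full $(d-1)$-dimensional faces of $\calP'$''. That assertion is false: whenever two vertices of $\calP$ lie in a common cell of $\calP$ without being adjacent---exactly the feature that distinguishes a non-simplicial tessellation such as $\mL^d$---the intersection $D_u\cap D_v$ is a face of dimension strictly less than $d-1$, and $\partial K$ is singular there. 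Concretely in $\mL^2$: if $(0,0),(1,1)\in A$ while $(1,0),(0,1)\notin A$ (take $x=y=(0,0)$, $x'=(1,0)$, $y'=(0,1)$, so all hypotheses of the Proposition hold), then $D_{(0,0)}\cap D_{(1,1)}=\{(1/2,1/2)\}$ and $\partial K$ has an X-type pinch at that point. It is therefore not a smooth codimension-one polytope, $\Int\calC$ and $\Est\calC$ are not defined in the paper's sense, and Lemma~\ref{prp:dentrofuori} cannot be applied. Replacing $D_v$ by the genuine small dual block of $v$ in a fine subdivision removes the pinch but destroys your property (i), since then $K$ is a disjoint union of small neighborhoods of the points of $A$ and every edge of $G(A)$ leaves $K$.

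The paper's construction is designed precisely to avoid the pinch. It first refines $\calP$ to a simplicial tessellation, so that two vertices in a common cell are automatically adjacent. It then builds $\calC$ not as the boundary of a vertex-thickening but as a cross-section: the vertices of $\calC$ are the midpoints of the $A$--$B$ edges, and inside each $i$-simplex $\grs$ of the refinement the corresponding $(i-1)$-cell of $\calC$ is the convex hull of the midpoints lying in $\grs$. Within a simplex this is a single affine slice separating its $A$-vertices from its $B$-vertices; these slices match across faces and never cross one another, so $\calC$ is automatically smooth. In the $\mL^2$ example above, the paper's $\calC$ inside (a triangulation of) $[0,1]^2$ consists of two disjoint segments cutting off the corners at $(0,0)$ and $(1,1)$, whereas $\partial K$ is the pair of diagonal halves meeting at the centre. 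Your claim (iii) is also asserted without argument, but the smoothness failure is the essential obstruction: to repair the proof one must replace $\partial K$ by a genuine separating cross-section, which is what the paper does.
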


\begin{proof} The proof follows exactly the same lines in the case of a
tessellation using convex cells and in the case of a general
tessellation. However in the first case all construction are more
intuitive and direct.  For this reason we give first the proof in
the case of a tessellation made of convex cells and then we briefly
explain how to change the proof in the general case.

\medskip

In the first step of the proof we assume also that all cells are
convex simplexes.  We construct a smooth polytope $\calC$ of
codimension one that separate $A$ and $B$ in the following way.

We define the set of vertexes $C_0$ of $\calC$ as the set of middle
points of edges joining an element of $A$ and an element of $B$. For
all $i$ dimensional simplexes $\grs$ of $\calC$ which contain an
element of $C_0$ the convex envelope of $\grs \cap C_0$ is a $i-1$
cell. Let $C_{i-1}$ be the collection of these cells and let $\calC$
be the polytope whose $i$ dimensional cells are given by $C_i$ for
$i=0,\dots,n-1$. By construction $\calC$ is a smooth polytope of
codimension one with a finite number of vertexes. Notice also that a
path in $G(A)$ or in $G(B)$ will never cross $\supp{\calC}$.

Let now $u,v\in C_0$ be the middle points of the edges joining $x$,
$x'$ and $y$ and $y'$ respectively. By Proposition
\ref{prp:dentrofuori} $u$ and $v$ are in the same connected
component of $G(\calC)$. Hence there exists a sequence of vertexes
$w_0=u, w_1, \dots, w_m=v$ in $C_0$ determining the path connecting
$u$ and $v$ in $G(\calC)$. Furthermore let $t_i\in A$ and $t'_i\in
B$ be such that $w_i$ is the middle point of the edge joining $t_i$
and $t'_i$.  Then $t'_0=x', t'_1,\dots, t'_m=y'$ determine a path in
$\Delta_\calP(A)$ joining $x'$ and $y'$.

\medskip

Let now $\calP$ be any tessellation with convex cells. We construct
a sequence $\calP^{(i)}$ of refinements of $\calP$ and of finite
subsets $A^{(i)}$ of the vertexes of $\calP^{(i)}$.

\begin{itemize}
\item $\calP^{(1)}$ is $\calP$ and  $A^{(1)}=A$.
\item $\calP^{(2)}$ is the tessellation obtained by adding a vertex
  $v_\grs$ in the barycentre of all $2$-dimensional faces $\grs\in P_2$ which
  are not a simplex and adding the edges joining $v_\grs$ with the
  vertexes of $\grs$. Finally we set $A^{(2)} = A \cup \{v_\grs
  \st \grs \cap A \neq \vuoto\}$.
\begin{comment}

  Le celle di dimensione superiori a $2$ rimangono

  invece invariate, e le mappe di bordo $\grs\mapsto C_2(\grs)$

  cambiano in modo ovvio.

\end{comment}
\item more generally given $\calP^{(i-1)}$, $\calP^{(i)}$ will be the
  tessellation obtained adding a vertex $v_\grs$ in the barycentre of
  every $i$-dimensional cell $\grs$ of $\calC^{(i-1)}$ which is not a
  simplex and adding all the $j$--dimensional cells obtained by joining
  this vertex with the $j-1$ cells contained in $\grs$. Finally we set
  $A^{(i)} = A^{(i-1)} \cup \{v_\grs \st \grs \cap A^{(i-1)} \neq
  \vuoto\}$.
\end{itemize}

Set $\calP'=\calP^{(d)}$ and $A'=A^{(d)}$, and $B'$ is the
complement of $A'$ in the set of vertexes of $\calP'$. Notice that
all cells of $\calC'$ are convex simplexes and that
$\Delta_\calP(A)=\Delta_{\calP'}(A')$. Hence we can apply to this
situation what we have already proved.

\medskip

In the case of a tessellation whose cells are not convex some of the
constructions we have described have to be modified. Indeed it does
not make sense to consider the middle point of an edge or the convex
envelope of the middle points in the first part of the proof or the
barycentre in the second part of the proof. As an example we show
which changes are necessary in the construction of the sequence
$\calP^{(i)}$.

Suppose we have already constructed $\calP^{(i-1)}$ and that all
$j$-dimensional cells of $\calP^{(i-1)}$ are simplexes for $j\leq
i-1$. Now consider a $i$-dimensional cell $\grs$ of $\calP^{(i-1)}$
which is not a simplex. Let $\grf:A\lra \grs$ be the piecewise
affine map which parametrise $\grs$ where $A$ is a convex cells.
Notice that by induction all faces of $A$ are convex simplexes. Now
let $v$ be the barycentre of $A$ and that we can divide $A$ into
simplexes $A_\tau$ joining $v$ with the faces of $A$. We obtain the
new tessellation by considering the restriction of $\grf$ to the
simplexes $A_\tau$.
\end{proof}

The result we will use in the next section is a variation of
Proposition \ref{prp:contorno1} where the set of vertexes $A$ is
replaced by a set of edges.

\begin{comment}

\footnote{questo \`e per inserire quel miglioramento sul vedere che

volevi te, ormai forse \`e una raffinatezza non necessaria, voglio

dire abbiamo ben altri problemi, forse \`e inutile speculare su un

paio di lati, m aintanto la metto.  Avevo preparato anche questa

proposizione ch enon ci serve pi\`u

\begin{prop}

Sia $C$ un grafo simpliciale di vertici $V$. Sia $A$ un sottoinsieme

finito di $A$ e sia $B=V\senza A$. Siano $P,Q$ due elementi di $A$

adiacenti a due elementi $P',Q'$ rispettivamente, di $B$. Supponiamo

che $P,Q$ siano $2$-connessi in $A$ e che $P',Q'$ siano connessi in

$B$. Allora $P',Q'$ sono connessi in $\Delta(A)$.

\end{prop}

\begin{proof}

Consideriamo un raffinamento della tassellazione poliedrale di

partenza ottenuta nel seguente modo: Per ogni $2$-cella $\grs$ che

contiene sia punti di $A$ che di $B$ aggiungiamo un vertice nel

centro della cella e congiungiamo questo vertice a tutti i vertici

di $\grs$. Sia $U$ l'insieme dei vertici aggiunti in questo modo e

sia $V'=V\cup U'$.

Consideriamo adesso $A'= A \cup U$. Osserviamo che $P,Q$ sono

$1$-connessi in $A'$. Inoltre $B=V'\senza A'$.

Osserviamo inoltre che  $\Delta (A')= \Delta(A)$ come insieme di

vertici e che non sono stati aggiunti lati con entrambi i vertici in

$B$.

Applicando la proposizione precedente si ottiene la tesi.

\end{proof}

}

\end{comment}

If $H\subset P_1$ we define the \emph{boundary} $\Delta_\calP(H)$ of
$H$ as the set of edges $e \in P_1\senza H$ for which there exists,
$d\in H $ and $\grs \in P_n$ such that $e,d\subset \grs$. Let also
$V_H=\{v\in P_0\st v\in e$ for some $e\in H\}$ and let $G(H)$ be the
subgraph $(V_H,H)$ of $G(\calP)$.

\begin{prop}\label{prp:contorno2} Let $\calP$ be a polyhedral tessellation of $\mR^d$
and $H \subset P_1$ be a finite set. Let $x,y \in V_H$ be connected
in $G(H)$ and in $G(P_1\senza H)$. Then $x$ and $y$ are connected in
$G(\Delta_\calP(H))$.
\end{prop}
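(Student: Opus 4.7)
The plan is to reduce Proposition \ref{prp:contorno2} to Proposition \ref{prp:contorno1} by passing to a refinement of $\calP$ that turns each edge of $H$ into a vertex. I would work in the barycentric subdivision $\calP'$ of $\calP$: its vertex set $V'$ consists of $V$ together with a barycentre $b_\grs$ for every cell $\grs$ of $\calP$ of positive dimension, and its cells are chains of cells of $\calP$.

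In $\calP'$ I would consider the finite set
$$A\ =\ V_H\,\cup\,\{b_\grs \st \dim\grs\geq 1 \text{ and some edge of } \grs \text{ lies in } H\},\qquad B=V'\senza A.$$
A path of $H$-edges $x=v_0,\dots,v_k=y$ in $\calP$ lifts to the $\calP'$-path $x,b_{e_1},v_1,b_{e_2},\dots,y$, which lies entirely in $A$; so $x,y$ are connected in $G(A)$. Since $x,y\in V_H\subset A$, Proposition \ref{prp:contorno1} does not apply to $x,y$ in the $B$-position directly. Instead I pick $H^c$-edges $e_x',e_y'$ incident to $x$ and $y$ (available because $x,y$ are endpoints of $P_1\senza H$-edges by hypothesis) and set $x^B=b_{e_x'},y^B=b_{e_y'}\in B$, each $\calP'$-adjacent to $x,y\in A$. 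Lifting the hypothesised $G(P_1\senza H)$-path from $x$ to $y$ through the midpoints of the $H^c$-edges it traverses shows $x^B,y^B$ are connected in $G(B)$, provided the lift stays in $B$; this may fail if the lifted path re-enters $V_H\subset A$ at an intermediate vertex, a case I would handle by a further refinement that ``splits'' each $v\in V_H$ that also meets $P_1\senza H$ into copies, one per incident edge type, so that the lift stays in $B$.

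Proposition \ref{prp:contorno1} applied in $\calP'$ to $A,B,x,y,x^B,y^B$ then yields a path in $G(\Delta_{\calP'}(A))$ joining $x^B$ and $y^B$. It remains to translate this $\calP'$-path back to $\calP$: by the construction of $A$, a midpoint $b_{e'}$ appearing along the $\calP'$-path corresponds to an edge $e'\in P_1\senza H$ sharing a cell with some $H$-edge, i.e., to an element of $\Delta_\calP(H)$; consecutive midpoints in the $\calP'$-path can then be converted into consecutive edges of $\Delta_\calP(H)$, either directly when they share a vertex of $\calP$ or, when they only share a higher cell, by inserting edges along the boundary of that cell. The main obstacle I anticipate is precisely the splitting/lifting step together with the bookkeeping needed to turn the $\calP'$-level conclusion into a clean sequence of $\Delta_\calP(H)$-edges in $\calP$.
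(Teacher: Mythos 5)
Your high-level plan—refine $\calP$ so that Proposition~\ref{prp:contorno1} applies, then translate back—is the same as the paper's, but the choice of barycentric subdivision is what creates the two difficulties you flag, and neither is actually resolved. The ``splitting'' step is a genuine gap: barycentric subdivision adds a barycentre per cell, not per vertex--edge incidence, so an original vertex $v\in V_H$ that also meets $P_1\senza H$ remains one vertex, and it is not clear how to subdivide so as to separate its $H$-side from its $H^c$-side. The paper sidesteps this with a different refinement: it truncates each vertex $w$ by a small ball $B_w$, introducing a new vertex where each incident edge crosses $\partial B_w$ (the set $D_w$), and takes $A$ to be the truncation-vertices lying on $H$-edges ($A_w$) together with the vertices of the given $H$-path. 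The coincidence problem is handled separately and more cheaply, by first subdividing the two given paths so they share no internal vertex; with that done, the lifted $H$-path lands only in $A$, while the lifted $H^c$-path enters and leaves each $B_w$ through truncation-vertices on $H^c$-edges, which are in $B=Q_0\senza A$ by construction.

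The back-translation is also underspecified in your proposal. With barycentric subdivision, $\Delta_{\calP'}(A)$ contains barycentres $b_\grs$ of cells of \emph{every} dimension, not only of edges, so the Proposition~\ref{prp:contorno1}-path does not read off as a sequence of edges of $\calP$; ``inserting edges along the boundary of that cell'' is the missing argument, not a remark. The truncation refinement is again what makes this step clean: each $e\in P_1$ survives as a single middle segment $\tilde e$, and one checks directly that $\tilde e$ an edge of $G(\Delta_\calQ(A))$ forces $e\in\Delta_\calP(H)$, so dropping the corner pieces from the $G(\Delta_\calQ(A))$-path immediately yields the desired path in $G(\Delta_\calP(H))$. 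So the refinement is not an interchangeable detail—the truncation is precisely what makes both the separation and the translation work, and those are the two places your write-up leaves open.
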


\begin{proof}
Let $e_1,\dots,e_m$ be a path in $H$ which join $x$ and $y$ and let
$f_1,\dots,f_n$ be a path in $P_1\senza H$ which joins $x$ and $y$.
Let $x=u_0,\dots,u_m=y$ and $x=v_0,\dots,v_n=y$ be the sequences of
adjacent vertexes determined respectively by the path in $H$ and in
$P_1\senza H$. Dividing the path into smaller pieces we can assume
that $u_i\neq v_j$ for $1\leq i\leq m-1$ and $1\leq j \leq n-1$.

We construct a refinement $\calQ$ of $\calP$ replacing each vertex
$w$ with a small convex cell so that we will be able to apply
Proposition \ref{prp:contorno1} to this situation.

More precisely for each vertex $w$ we consider a ``small'' ball
$B_w$ with centre $w$ which is ``far'' from every cell of $\calP$
not containing $w$. We set also $D_w$ to be the intersection of the
boundary of $B_w$ with the edges in $P_1$ containing $w$ and $A_w$
to be the intersection of the boundary of $B_w$ with the edges in
$H$ containing $w$.  We set
$$
Q_0  = P_0 \cup \bigcup_{w\in P_0} D_v \; \text{ and }\; A  =
\{u_i:i=0,\dots,m\} \cup \bigcup_{w\in P_0}A_w.
$$

We now describe the higher dimensional cells of $\calQ$. Let $F_w$
be convex envelope of the set $D_w$ and let $M_n$ be its boundary
and $N_w$ its open part. If $B_w$ is small enough for every
$i$--dimensional cell $\grs$ of $\calP$ containing $w$ the
intersections $\grs_w=\grs\cap F_w$ and $\grs_w^\partial=\grs\cap
M_w$ are respectively an $i$-dimensional cell and an
$(i-1)$--dimensional cell.  Finally for all $i$--dimensional cells
$\grs$ of $\calP$ the set $\tilde \grs = \grs \senza \bigcup_w N_w$
is also an $i$--dimensional cell. The tessellation $\calQ$ is the
collection of the cells $\tilde \grs$, $\grs_w$ and
$\grs_w^\partial$ for $\grs$ a cell of $\calP$ and $w\in P_0$.

Notice that if $\tilde \grs$ is an edge in $G(\Delta_\calQ(A))$ then
$\grs$ is an edge in $G(\Delta_\calP(H))$.

We now apply Proposition \ref{prp:contorno1}. The sequence of edges
$$
(e_1)_{u_0}, \tilde e_1, (e_1)_{u_1}, (e_2)_{u_1},\tilde e_2,\dots,
\tilde e_m,(e_m)_{u_m}
$$ is a path in $G(A)$ joining $x$ and $y$. Let
$x'=(f_1)^\partial_{x}$ and $y'=(f_n)^\partial_{y}$ then they are
two vertexes adjacent respectively to $x$ and $y$ and the sequence
of edges
$$
\tilde f_1, (f_1)_{v_1}, (f_2)_{v_1},\tilde f_2,\dots,
(f_n)_{v_{n-1}},\tilde f_n
$$
is a path in $G(Q_0\senza A)$ joining $x'$ and $y'$. Hence there
exists a path
$$
\tau_1,\dots,\tau_{r_1},\tilde
\grs_1,\tau_{r_1+1},\dots,\tau_{r_2},\tilde \grs_2,\dots, \tau_{r_d}
$$
in $G(\Delta_\calQ(A))$ joining $x'$ and $y'$ where we assume that
$\tau_i$ is of the form $\eta_w$ or $\eta_w^\partial$ for all $i$.
Then $\grs_1,\dots,\grs_{d-1}$ is a path in $G(\Delta_\calP(H))$
joining $x$ and $y$.
\end{proof}

%%%%%%%%%%%%%%%%%%%%%%%%%%%%%%%%%%%%%%%%%%%%%%%%%%%%%%%%%%%%%%%%%%%%%%%%%%%%%%%%%%%%%%%%%%%%%%%%

%%%%%%%%%%%%%%%%%%%%%%%%%%%%%%%%%%%%%%%%%%%%%%%%%%%%%%%%%%%%%%%%%%%%%%%%%%%%%%%%%%%%%%%%%%%%%%%%

%%%%%%%%%%%%%%%%%%%%%%%%%%%%%%%%%%%%%%%%%%%%%%%%%%%%%%%%%%%%%%%%%%%%%%%%%%%%%%%%%%%%%%%%%%%%%%%%

%%%%%%%%%%%%%%%%%%%%%%%%%%%%%%%%%%%%%%%%%%%%%%%%%%%%%%%%%%%%%%%%%%%%%%%%%%%%%%%%%%%%%%%%%%%%%%%%

%%%%%%%%%%%%%%%%%%%%%%%%%%%%%%%%%%%%%%%%%%%%%%%%%%%%%%%%%%%%%%%%%%%%%%%%%%%%%%%%%%%%%%%%%%%%%%%%

\section{Coupling of the random cluster measure at low or high temperatures}\label{sec:bassa}

In this section we fix a polyhedral tessellation $\calP$ in $\mR^d$
and we denote by $G=(V,E)$ the underlying simplicial graph. We
introduce a new graph $G^{*} =( V^{*} ,E^{*} )$ where $V^{*} = \mZ
\times V$ and $E^{*} = \mZ \times E\cup \mZ\times V$ where if $x,y$
are the end vertexes of $e\in E$ then $(n,x),(n,y)$ are the end
vertexes of $(n,e)\in E^*$ and if $v\in V$ and $n\in \mZ$ then we
denote with $e_{n,v}$ the corresponding element in $E^*$ and its end
vertexes are $(n,v),(n+1,v) \in V^*$. Notice that $G^*$ is the
simplicial graph of a polyhedral tessellation $\calP^*$ of
$\mR^{d+1}=\mR\times \mR^d$ whose $i$--dimensional cells are the
collection of the cells of the form $\{n\}\times \grs$ where $\grs$
is an $i$--dimensional cell of $\calP$ and $n\in \mZ$ and of the
cells of the form $[n,n+1]\times \tau$ and $\tau$ is an
$(i-1)$--dimensional cell of $\calP$ and $n\in \mZ$.  We consider
$G$ as the subgraph $G\times \{ -1\}$ of $G^*$.

Fix $u \in \calU$. We define the following coloring of the edges
$E^*$ of $G^*$:
\begin{align*}
W & = W(u) = \{(n,e_k)\in E^* \st (n,k)\in \calA \mand p_{e_k}\leq u_{n,k} \leq 1 \},\\
M & = M(u) = \{(n,e_k)\in E^* \st (n,k)\in \calA \mand \hat p_{e_k} \leq u_{n,k} < p_{e_k}\},\\
B & = B(u) = E^*\senza (M(u)\cup W(u)).
\end{align*}
We define also $B_n=\{e\in E\st (n,e)\in B\}$. We say that the
elements of $B$ are black, the elements of $M$ are gray and the
elements of $W$ are white.

Given a subset $F$ of $M(u) \cup W(u)$ we define the \emph{ cluster
of white or  gray edges } $C_F^w=C_F^w(u)$ as the minimum subset $H$
of $E^*$ containing $F$ such that if $e \in E^*$ and $e \in
\Delta_{\calP^*}(H)$ then $e\in B(u)$.  More in general if $F\subset
E$ is not necessarily a subset of $M \cup W$ we define $C_F^w=F\cup
C_{F\senza B}^w$. Equivalently if $F\subset M$ we can construct
$C_F^w$ inductively by adding the white or gray edges ``near'' to
$F$ as follows. Let $D_0=F$ and $D_{i+1} = D_i \cup
(\Delta_{\calP^*}(D_i)\senza B)$ then $C_F^w = \bigcup_i D_i$.

%Notice that we have $C_F^w(u) = \bigcup_{e\in F} C_{\{e\}}^w(u)$.

Notice that as the numbers $p_e$ grow the probability that $C^w_F$
is finite increase.  As we will see in the last section in the Ising
model the $p_e$'s are related to a parameter called temperature and
when this parameter is small the $p_e$ are closer to $1$. For this
reason we refer to the case in which $C^w_F$ is finite as the
situation at low temperature.

Assume now that $C_F^w$ is finite. In this case we define $N^w(u,F)$
as the biggest negative integer $N$ such that $C_F^w \subset E
\times [N+1,-1]$. We define
\begin{equation}\label{eq:H}
 H_n^w = H_n^w(u,F)= \big\{e \in E \st
(n,e) \in C_F^w \text{ or } (n-1,e)\in C_F^w \big\}
\end{equation}
and we set $\bar H_n^w(u,F)= H_n^w \cup \Delta_\calP (H_n^w)$ and
finally $\bar H^w(u,F) = \bigcup_n \{n\}\times \bar H^w_n $.

\begin{theorem}\label{teorema1}
Fix $u \in \calU$ such that $C_F^w(u)$ is finite and an integer
$N\leq N^w(u,F)$. Let $u'\in \calU_N$ such that $u'_{n,k} = u_{n,k}$
for all $(n,e_k) \in \bar H^w(u,F)$. Then for all $\omega,\omega'
\in \Omega$ we have
$$ \big(X_{0,0}^{(\omega,N)}(u)\big)_e = \big( X_{0,0}^{(\omega',N)}(u')\big)_e $$
for all $e\in \bar H_{-1}^w(u,F)$.
\end{theorem}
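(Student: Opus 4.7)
To prove the theorem I set $Y = X^{(\omega,N)}(u)$ and $Y' = X^{(\omega',N)}(u')$, and proceed by induction on $n \in \{N,N+1,\ldots,-1\}$ with the inductive statement
\[
P(n):\quad (Y_{n+1,0})_e = (Y'_{n+1,0})_e \quad\text{for every } e \in \bar H^w_n(u,F).
\]
The conclusion $P(-1)$ is exactly the theorem. The base cases $n \leq N^w(u,F)$ are vacuous: by definition of $N^w(u,F)$ the cluster $C_F^w$ has no elements at levels $\leq N^w(u,F)$, hence $H^w_n = \emptyset$ and $\bar H^w_n = \emptyset$ for such~$n$.

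For the inductive step, the assumption $u_{n,k} = u'_{n,k}$ whenever $(n,e_k) \in \bar H^w$ ensures that every update of an edge $e \in \bar H^w_n$ at level $n$ uses the same random variable in $Y$ and in $Y'$. I split $\bar H^w_n$ as $H^w_n \cup (\Delta_\calP(H^w_n)\setminus H^w_n)$. For $e \in \Delta_\calP(H^w_n)\setminus H^w_n$, the definition yields $d \in H^w_n$ and a top cell $\tau$ of $\calP$ with $e,d\subset \tau$; since $(n,d) \in C_F^w$ or $(n-1,d) \in C_F^w$, the top cell $[n-1,n]\times \tau$ or $[n,n+1]\times \tau$ of $\calP^*$ certifies $(n,e) \in \Delta_{\calP^*}(C_F^w) \subset B(u)$, so the update deterministically gives $(Y_{n+1,0})_e = 1 = (Y'_{n+1,0})_e$. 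For $e \in H^w_n$ with $(n,e)$ white or black the update rule is again deterministic and identical in the two processes.

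The only non-trivial case is $e \in H^w_n$ with $(n,e)$ gray: the update yields $1$ precisely when the endpoints $x,y$ of $e$, both in $V_{H^w_n}$, are joined by an open path in $Y_{n,k_e-1}$ avoiding $e$. I run a sub-induction on the step $k$ within level $n$, showing $(Y_{n,k})_{e'} = (Y'_{n,k})_{e'}$ for each $e' \in \bar H^w_n$ that has either already been updated during level $n$ or lies in $\bar H^w_{n-1}$ (so that $P(n-1)$ supplies its initial value). The geometric heart is Proposition~\ref{prp:contorno2} with $H = H^w_n$: an open path from $x$ to $y$ exiting $H^w_n$ exhibits two vertices joined both in $G(H^w_n)$ and in $G(E\setminus H^w_n)$, and the proposition then produces a connecting path inside $\Delta_\calP(H^w_n) \subset \bar H^w_n$. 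Since $Y$ and $Y'$ agree on this set, the event $\{Y_{n,k_e-1} \in K_e\}$ has the same truth value in both, completing the inductive step.

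The main obstacle is synchronising the prescribed edge order with this geometric enclosure. A boundary edge $e' \in \Delta_\calP(H^w_n)\setminus H^w_n$ is black at level $n$, but its forced value $1$ only materialises after step $k_{e'}$; if $k_{e'} > k_e - 1$ and $e' \notin \bar H^w_{n-1}$, then $(Y_{n,k_e-1})_{e'}$ is genuinely $\omega$-dependent and the two processes may disagree on it. The delicate aspect of applying Proposition~\ref{prp:contorno2} is therefore to reroute any open excursion of the path leaving $H^w_n$ through already-updated boundary edges or through edges of $H^w_n$ controlled by the sub-induction, so that the open-connectivity of $x$ and $y$ is ultimately certified using only edges of $\bar H^w_n$ on which $Y$ and $Y'$ already coincide.
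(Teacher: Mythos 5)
Your proof is essentially the paper's, and you have correctly identified the place where the argument becomes delicate — but you have not closed the gap, and the resolution you gesture at (rerouting through already-updated boundary edges) is not the one that works.

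The paper begins its proof by establishing a \emph{stronger} blackness fact than the one you prove. You show that $e \in \Delta_\calP(H^w_n)\setminus H^w_n$ forces $(n,e)\in B(u)$, concluding that the update of $e$ at level $n$ deterministically sets its value to $1$ \emph{after step $k_e$}. This is exactly why, as you note, a boundary edge $e'$ with $k_{e'}>k_e-1$ and $e'\notin \bar H^w_{n-1}$ is uncontrolled at the moment $e$ is being updated. The paper avoids this by proving $\Delta_\calP(H^w_n)\subset B_n\cap B_{n-1}$, i.e.\ that every boundary edge is black at level $n$ \emph{and} at level $n-1$. The proof is a small extension of the argument you already gave: since $e\notin H^w_n=C_n\cup C_{n-1}$, both $(n,e)\notin C^w_F$ and $(n-1,e)\notin C^w_F$; and since some $d\in H^w_n$ shares a top cell of $\calP$ with $e$, both $(n,e)$ and $(n-1,e)$ lie in $\Delta_{\calP^*}(C^w_F)$, hence both are black. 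Blackness at level $n-1$ means the edge is already open in $Y_{n,0}$ (for $n>N$), and blackness at level $n$ keeps it open through all of level $n$'s updates. Thus $\Delta_\calP(H^w_m)\subset E(\eta_{m,h})\cap E(\eta'_{m,h})$ for all $m>N$ and all $h\geq 0$, \emph{independently of the update order within the level} — the synchronisation problem you raise simply does not arise.

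By contrast, the rerouting strategy you propose cannot be carried out: Proposition~\ref{prp:contorno2} produces \emph{some} connecting path inside $\Delta_\calP(H^w_n)$, but there is no mechanism to force that path to avoid the not-yet-updated boundary edges. The fix is not geometric control of the path but the stronger probabilistic fact that the boundary is black at two consecutive levels. Once you add $\Delta_\calP(H^w_n)\subset B_n\cap B_{n-1}$ at the start of your argument, everything else in your proof — the outer induction on $n$, the sub-induction on $k$, the case analysis black/white/gray, and the appeal to Proposition~\ref{prp:contorno2} for the gray case — matches the paper's proof and goes through.
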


\begin{proof}
Let $\eta_{n,k}=X_{n,k}^{(\omega,N)}(u)$ and
$\eta'_{n,k}=X_{n,k}^{(\omega',N)}(u')$.  Let also $C = C_ F^w$ and
$C_n = \{e\in E\st (n,e) \in C\}$, $H_n =C_n\cup C_{n-1}$ and $\bar
H_n =H_n\cup \Delta_\calP(H_n)$. We notice first that
$\Delta_\calP(H_n)\subset B_n \cap B_{n-1}$. Indeed let $e\in
\Delta_\calP(H_n)$, then there exists $e' \in H_n$ and a cell $\grs$
in $\calP$ such that $e$ and $e'$ are contained in $\grs$. If
$e\notin B_n$ then by the definition of $C$ and of the cells of
$\calP^*$ we have that $(n,e)\in C$, hence $e\in C_n$ which is in
contradiction with $e\in \Delta_\calP(H_n)$.  Similarly we get an
absurd if $e \notin B_{n-1}$. By the definition of a
$FK^N_{p,q}$--process this implies that
\begin{equation}\label{DeltaH}
\Delta_\calP(H_m)\subset E(\eta_{m,h})\cap E(\eta'_{m,h})
\end{equation}
for all $m > N$ and $h\geq 0$.

We will prove that
\begin{equation}\label{eqX}
   \big(\eta_{m,h}\big)_e = \big(\eta'_{m,h}\big)_e
\end{equation}
for all $(m,h) \in \tilde \calA_N$ and for all $e\in \bar H_m$. We
prove this by induction starting with $m=N$ and $h=0$. For $m\leq
N^w(u,F)$ and for all $h$ the equality \eqref{eqX} is trivially
satisfied since $C_{m}=\vuoto$.

Now we prove that if equation \eqref{eqX} holds for $m=N,\dots,n-1$
and for all $h$ and for $m=n$ and $h=0,1,\dots, k$ then it holds
also $m=n$ and $h=k+1$.  Let $e\in \bar H_n$. If $e\neq e_{k+1}$
then by induction and definition of $FK^N_{p,q}$--process we get
$$
\big(\eta_{n,k+1}\big)_e  =  \big(\eta_{n,k}\big)_e  =
\big(\eta'_{n,k}\big)_e  =  \big(\eta'_{n,k+1}\big)_e
$$
proving the claim. If $e=e_{k+1}$ and $e\in \bar H_n$ we compute
$\big(\eta_{n,k+1}\big)_{e_{k+1}}$. If $u_{n,k+1}\geq p_{e_{k+1}}$
then we have $\big(\eta_{n,k+1}\big)_{e_{k+1}} =0$ and similarly for
$\eta'$. If $u_{n,k+1}< \hat p_{e_{k+1}}$ then we have
$\big(\eta_{n,k+1}\big)_{e_{k+1}} =1$ and similarly for $\eta'$. If
$\hat p_{e_{k+1}} < u_{n,k+1}\leq p_{e_{k+1}}$ we need to prove that
\begin{equation}\label{KiffK}
\eta_{n,k}  \in K_{e_{k+1}} \quad \text{ iff } \quad \eta'_{n,k} \in
K_{e_{k+1}}.
\end{equation}
Let $x$ and $y$ be the end vertexes of $e_{k+1}$ and assume that
there is a path $\gamma:\gre_1,\dots,\gre_m$ in $E(\eta_{n,k})\senza
\{e_{k+1}\}$ joining $x$ and $y$.

If the path is contained in $\bar H_n=C_n\cup C_{n-1} \cup
\Delta_\calP (H_n)$ then we prove that the same path is contained in
$E(\eta'_{n,k} )\senza \{e_{k+1}\}$. Let $e_r$ be an edge of the
path. By induction $(\eta_{n,k}) _{e_r} =(\eta'_{n,k})_{e_r}$ hence
$e_r\in E(\eta'_{n,k} )$.

If $\gamma$ is not contained in $\bar H_n$ we show there is another
path joining $x$ and $y$ contained in $\bar H_n$. By \eqref{DeltaH}
and the fact that $\hat p_{e_{k+1}} < u_{n,k+1}\leq p_{e_{k+1}}$ we
have $e_{k+1}\in H_n$. Let $D$ be the connected component of $H_n$
containing $e_{k+1}$. Let $\gre_1,\dots,\gre_{i-1} \in D$, $\gre_i,
\dots, \gre_j \notin D$ and $\gre_{j+1}\in D$. Let $x'$ be the
vertex common to $\gre_{i-1}$ and $\gre_i$ and $y'$ the vertex
common to $\gre_j$ and $\gre_{j+1}$.  We can apply Proposition
\ref{prp:contorno2} and we construct a path $\grb$ in
$\Delta_\calP(D)\subset \Delta_\calP(H_n)$ joining $x'$ and $y'$.
Since $\Delta_\calP(H_n)\subset E(\eta_{n,k})\cap E(\eta'_{n,k})$ we
can replace $\grg$ with the path
$\grg':\gre_1,\dots,\gre_{i-1},\grb,\gre_{j+1}, \dots, \gre_m$.
Repeating this process we see that we can substitute the path
$\gamma$ with a path entirely contained in $D\cup
\Delta_\calP(D)\subset \bar H_n$ as claimed. Hence we are reduced to
the previous case.

\medskip

Finally we prove that if \eqref{eqX} holds for a fixed $m$ and all
$h\geq 1$ then it holds also for $m+1,0$. Let $e \in C_{m+1}$. If $e
\in \bar H_{m}$ this follows by definition of the
$FK^N_{p,q}$-process. If $e=e_r \in \bar H_{m+1}\senza C_m$ then $e
\in B_{m}$ otherwise $e$ would be an element of $C_{m}$). Then
$$(\eta_{m+1,0})_{e_r}=(\eta_{m,r})_{e_r}=1=(\eta'_{m,r})_{e_r}=(\eta_{m+1,0})_{e_r}$$
proving the claim.
\end{proof}

\subsection{Coupling at high temperatures}\label{ssec:alta}

We give now a similar result corresponding, in the Ising model, to
high temperatures.

Let $G=(V,E)$ be a countable graph (in this case we do not assume
simplicial). Define $\bar G$ as the graph with set of vertexes $\bar
V= \mZ_{< 0}\times V$ and edges $\bar V= \mZ_{< 0} \times E$ where
if $e \in E$ has end vertexes $x,y$ then the edge $(n,e)$ has end
vertexes $(n,x)$ and $(n,y)$.  We consider $G$ as the subgraph
$G\times \{ -1\}$ of $G^*$. For all $u \in \calU$ define $M(u),W(u)$
as in the previous section.

% and set $\bar B(u)=\bar E \senza (M(u)\cup

%W(u))$. For $n\leq -1$ we set also $B_n=B_n(u)=\{e\in E\st (n,e)\in

%B(u)\}$, $M_n=M_n(u)=\{e\in E\st (n,e)\in M(u)\}$ and

%$W_n=W_n(u)=\{e\in E\st (n,e)\in W(u)\}$.

For a subset $H$ of $E$ we denote by $\Gamma(H)$ the set of edges
which are not in $H$ and which have a vertex in common with an edge
in $H$.

Fix $u\in \calU$ and a subset $F$ of $E^*$. If $F\subset M(u)$ we
define \emph{ the cluster of black or gray edges } $C^b_F=C^b_F(u)$
as the smallest set $C$ of $\bar E$ containing $F$ and such that for
all $(n,e)\in \bar E \senza C$ if either $(n-1,e)$ or $(n,e)$ or
$(n+1,e)$ have a vertex in common with an edge in $C$ then $(n,e)\in
W(u)$. If $F$ is not necessarily contained in $M(u)$ we define
$C_F^b(u)=C^b_{F\cap M(u)}(u)\cup F$. For all $n<0$ we define
$$ H_n^b = H_n^b(u,F)= \big\{e \in E \st (n,e) \in C_F^b(u) \text{ or
} (n-1,e)\in C_F^b(u) \big\}$$ and we set $\bar H_n^b(u,F)= H_n^b
\cup \Gamma (H_n^b)$ and $\bar H^b(u,F) = \bigcup_n \{n\}\times \bar
H^b_n $.

Finally if $C_F^b(u)$ is finite we define $N^b(u,F)$ as the biggest
negative integer $N$ such that $C_F^b \subset E \times [N+1,-1]$.

\begin{theorem}\label{teorema2}
Fix $u \in \calU$ such that $C_F^b(u)$ is finite and an integer
$N\leq N^b(u,F)$. If $\omega, \omega' \in \Omega$ and $u'\in
\calU_N$ is such that $u'_{n,k} = u_{n,k}$ for all $(e_k,n) \in \bar
H^b(u,F)$ then
$$ \big(X_{0,0}^{(\omega,N)}(u)\big)_e = \big( X_{0,0}^{(\omega',N)}(u')\big)_e $$
for all $e \in \bar H^b_{-1}(u,F)$.
\end{theorem}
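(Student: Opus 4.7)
The plan is to follow the inductive scheme of the proof of Theorem~\ref{teorema1} with the roles of the black boundary $\Delta_\calP(H_n)$ (which there forces open values through \eqref{DeltaH}) and the white boundary $\Gamma(H_n^b)$ (which here will force closed values) exchanged. Because the ``separating barrier'' now consists of closed edges, the argument no longer needs the simplicial machinery of Section~\ref{sec:grafisimpliciali}, consistent with the fact that $G$ is only assumed to be a countable graph in this section.

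Writing $\eta_{n,k}=X_{n,k}^{(\omega,N)}(u)$, $\eta'_{n,k}=X_{n,k}^{(\omega',N)}(u')$, and abbreviating $C=C_F^b$, $C_n=\{e\st(n,e)\in C\}$, $H_n=C_n\cup C_{n-1}$, $\bar H_n=H_n\cup \Gamma(H_n)$, the first step is the analog of \eqref{DeltaH}: for every $e\in \Gamma(H_n)$ both $(n,e)$ and $(n-1,e)$ lie in $W(u)$. This is an immediate unwinding of the minimality definition of $C_F^b$: $e\in \Gamma(H_n)$ means $e$ shares a vertex with some $d$ satisfying $(n,d)\in C$ or $(n-1,d)\in C$, while $(n,e),(n-1,e)\notin C$, so the defining condition forces both copies of $e$ to be white. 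Since a white edge is set to $0$ when processed and was already $0$ in the incoming configuration from the previous level (by the same reasoning at level $n-1$), one obtains $(\eta_{m,h})_e=(\eta'_{m,h})_e=0$ for every $m>N$, $h\geq 0$ and $e\in \Gamma(H_m)$.

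The core induction proves $(\eta_{m,h})_e=(\eta'_{m,h})_e$ for all $(m,h)\in \tilde \calA_N$ and all $e\in \bar H_m$, starting from the vacuous base $m\leq N^b(u,F)$ where $\bar H_m=\vuoto$. The step $(n,k)\to (n,k+1)$ reduces, exactly as in Theorem~\ref{teorema1}, to the gray subcase $\hat p_{e_{k+1}}\leq u_{n,k+1}< p_{e_{k+1}}$, where one must show $\eta_{n,k}\in K_{e_{k+1}}\iff \eta'_{n,k}\in K_{e_{k+1}}$. Since $M(u)\cap W(u)=\vuoto$, the assumption $(n,e_{k+1})\in M(u)$ combined with the first step rules out $e_{k+1}\in \Gamma(H_n)$, so $e_{k+1}\in H_n$ and both its endpoints lie in $V_{H_n}$.

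The main obstacle is the treatment of a path $\gamma:\gre_1,\dots,\gre_m$ in $E(\eta_{n,k})\setminus\{e_{k+1}\}$ joining the endpoints of $e_{k+1}$. In place of the simplicial rerouting via Proposition~\ref{prp:contorno2}, here a confinement argument suffices: following $\gamma$ from its initial vertex, which lies in $V_{H_n}$, any edge leaving $H_n$ would share a vertex with the preceding edge of $H_n$ and hence lie in $\Gamma(H_n)$, forcing it to be closed in $\eta_{n,k}$ and contradicting its presence in the path; therefore $\gamma\subset H_n\subset \bar H_n$. The inductive hypothesis then gives the same path in $E(\eta'_{n,k})$, and symmetry yields the iff. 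The transition from level $m$ to $(m+1,0)$ follows the pattern of Theorem~\ref{teorema1}: for $e\in \bar H_{m+1}\setminus \bar H_m$, applying the defining condition of $C_F^b$ to $(m,e)\notin C$ (triggered by some edge in $C$ at level $m+1$ adjacent to $e$, or by $(m+1,e)$ itself) forces $(m,e)\in W(u)$ and hence $(\eta_{m+1,0})_e=0=(\eta'_{m+1,0})_e$. Specializing the claim at $m=-1$ and letting $h\to \infty$ yields the theorem, since $\eta_{-1,\infty}=\eta_{0,0}$.
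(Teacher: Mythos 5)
Your proof follows the paper's own (sketched) argument for Theorem~\ref{teorema2} faithfully: you establish the analogue of equation~\eqref{DeltaH}, namely that for $e\in\Gamma(H_n)$ both $(n,e)$ and $(n-1,e)$ are white, run the identical two-dimensional induction on $(m,h)$ that the paper uses for Theorem~\ref{teorema1}, and replace the simplicial rerouting of Proposition~\ref{prp:contorno2} with a confinement argument. This is exactly what the authors intend, and indeed the proof in the paper is only a pointer back to Theorem~\ref{teorema1} together with the observation that the white boundary $\Gamma(H^b_n)$ makes things easier.

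One small point in your favour: the paper asserts the global inclusion ``$E(\eta_{n,h})\subset H^b_n$,'' which cannot literally be correct (there may be black edges far from $C^b_F$ that are open in $\eta_{n,h}$). Your local confinement argument — that any path in $E(\eta_{n,k})\setminus\{e_{k+1}\}$ starting from an endpoint of $e_{k+1}\in H_n$ cannot exit $H_n$ without first using an edge of $\Gamma(H_n)$, which is closed — captures the correct and precise version of what the paper means. Everything else (the vacuous base of the induction, the three-way split on $u_{n,k+1}$, the treatment of the transition $(m,\infty)\to(m+1,0)$ via the white forcing at level $m$) matches the paper's template verbatim. In short: same approach, with a slightly sharper formulation of the confinement step.
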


\begin{proof}
The proof follows exactly the same strategy of the proof of
Theorem~\ref{teorema1}. However in this case it is simpler since we
do not have to use the result of Section~\ref{sec:grafisimpliciali}.
We give here only the main lines.  Indeed an argument analogous to
proof of equation~\eqref{DeltaH} gives
\begin{equation}\label{GammaH}
\Gamma(H^b(u,F))\subset W(u).
\end{equation}
Then we prove the equality~\eqref{eqX} by induction as in the proof
of Theorem~\ref{teorema1}. Also in this case we are reduced easily
to prove the equivalence~\eqref{KiffK}. This equivalence is easier
in this case, since, by~\eqref{GammaH}, we have that
$E(\eta_{n,h})\subset H^b_n$ for all $n>N$ and similarly for $\eta'$
so we can assume that the path joining the extremal point of
$e_{k+1}$ is contained in $H^b_n$ without using any further result,
while in the proof of Theorem~\ref{teorema1} we need Proposition
\ref{prp:contorno2}.  The remaining argument are completely similar
to the proof of Theorem~\ref{teorema1}.
\end{proof}

%%%%%%%%%%%%%%%%%%%%%%%%%%%%%%%%%%%%%%%%%%%%%%%%%%%%%%%%%%%%%%%%%%%%%%%%%%%%%%%%%%%%%%%%%%%%%%%%

%%%%%%%%%%%%%%%%%%%%%%%%%%%%%%%%%%%%%%%%%%%%%%%%%%%%%%%%%%%%%%%%%%%%%%%%%%%%%%%%%%%%%%%%%%%%%%%%

%%%%%%%%%%%%%%%%%%%%%%%%%%%%%%%%%%%%%%%%%%%%%%%%%%%%%%%%%%%%%%%%%%%%%%%%%%%%%%%%%%%%%%%%%%%%%%%%

%%%%%%%%%%%%%%%%%%%%%%%%%%%%%%%%%%%%%%%%%%%%%%%%%%%%%%%%%%%%%%%%%%%%%%%%%%%%%%%%%%%%%%%%%%%%%%%%

%%%%%%%%%%%%%%%%%%%%%%%%%%%%%%%%%%%%%%%%%%%%%%%%%%%%%%%%%%%%%%%%%%%%%%%%%%%%%%%%%%%%%%%%%%%%%%%%

\section{Assumptions for the finiteness of clusters}\label{sec:clusterfiniti}

In this section, given a graph $G$, we present some conditions on it
and on the parameters $p$ such that the cluster $C^w_F$ of
Theorem~\ref{teorema1} is almost surely finite or such that the
cluster $C^b_F$ of Theorem~\ref{teorema2} is almost surely finite.

We start by recalling a general Lemma.  Let $\mG=(\mV,\mE)$ be a
graph and let $\pi=(\pi_v)_{v\in \mV}$ be an element of $[0,1]^\mV$.
Consider the product measure on the space $\Omega_\mG=\{0,1\}^\mV$
such that $P(\omega_v=0) = \pi_v$. For each $\omega \in \Omega_\mG$
let $\mG[\omega]$ be the subgraph of $G$ with set of vertexes
$\mV[\omega]=\{v\in \mV\st \omega_v = 0\}$ and with edges the set
$\mE[\omega]$ of the elements of $\mE$ joining two elements in
$\mV[\omega]$. Moreover for each $\omega \in \Omega_\mG$ and for any
$v\in \mV$ set $\mG_v[\omega]$ the connected component of
$\mG[\omega]$ containing $v$ (possibly empty if $v\not\in
\mV[\omega]$) and if $n$ is a natural number let $\mV_{v,n}[\omega]$
be the set of vertexes in $\mG_v[\omega]$ whose graph-distance from
$v$ in the graph $\mG_v[\omega]$ is equal to $n$.

\begin{lemma}\label{lem:casosemplice}

Let $\mG$ and $\pi$ be as above. For each vertex $v\in \mV$ let
$A_v$ be the set of vertexes adjacent to $v$ and set $g_v =
\sum_{v\in A_v } \pi_v$. If $g=\sup \{g_v\st v\in \mV \}< 1$ then
$P(\{\omega\in \Omega_\mG \st \mV_{v,n}[\omega] \neq \vuoto\} )\leq
g^n$.
\end{lemma}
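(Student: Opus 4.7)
The plan is to combine a union bound over walks with the multiplicative estimate $g_v \leq g$. First, I observe the following reduction: if $\mV_{v,n}[\omega] \neq \vuoto$, then there is some vertex $u$ whose graph-distance from $v$ in $\mG_v[\omega]$ equals $n$, and a shortest path in $\mG_v[\omega]$ realising this distance is a simple path $v = v_0, v_1, \ldots, v_n = u$ of length $n$, with every $v_i \in \mV[\omega]$, i.e.\ $\omega_{v_i} = 0$. This is the only place the hypothesis on $\omega$ enters.

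Next I apply a union bound over all such simple paths of length $n$ starting at $v$. Since the random variables $(\omega_w)_{w\in \mV}$ are independent with $P(\omega_w = 0) = \pi_w$, and the vertices $v_0,\dots,v_n$ in a simple path are distinct, the probability that the path lies in $\mV[\omega]$ is exactly $\prod_{i=0}^n \pi_{v_i}$. Thus
$$
P(\mV_{v,n}[\omega] \neq \vuoto) \leq \sum_{\text{simple paths } v_0=v,v_1,\dots,v_n} \prod_{i=0}^n \pi_{v_i}.
$$

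To match the target bound $g^n$ (which has $n$ factors rather than $n+1$), I use $\pi_{v_0}\leq 1$ to drop the leading factor, and I relax the sum over simple paths to a sum over all walks $v_0 = v, v_1, \dots, v_n$ with $v_k\in A_{v_{k-1}}$:
$$
P(\mV_{v,n}[\omega] \neq \vuoto) \leq \sum_{v_1 \in A_v} \pi_{v_1} \sum_{v_2 \in A_{v_1}} \pi_{v_2} \cdots \sum_{v_n \in A_{v_{n-1}}} \pi_{v_n}.
$$
Bounding the innermost sum by $g_{v_{n-1}} \leq g$ and peeling off one layer at a time (or, equivalently, by induction on $n$), each of the $n$ sums contributes at most a factor $g$, which gives the claimed estimate $g^n$.

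There is no real obstacle here; the only mild subtlety is the bookkeeping of indices (a path of length $n$ has $n+1$ vertices, whereas the bound $g^n$ has $n$ factors), which is handled by throwing away the first factor $\pi_{v_0} \leq 1$ before summing.
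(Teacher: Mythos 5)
Your argument is correct, but it differs from the paper's. The paper sets $Z_{v,n}=\card\big(\mV_{v,n}[\omega]\big)$, observes that the breadth-first exploration makes $\{Z_{v,n}\}_n$ a supermartingale with $E(Z_{v,n+1}\mid Z_{v,n})\leq g\,Z_{v,n}$, deduces $E(Z_{v,n})\leq g^n$, and concludes by Markov's inequality. You instead use a direct first-moment/union-bound over self-avoiding paths of length $n$ rooted at $v$, pass to the larger set of all walks, drop $\pi_{v_0}\leq 1$, and peel off the nested sums one layer at a time to obtain $g^n$. In substance these are the same first-moment calculation: the quantity bounded in both cases is $E(Z_{v,n})$, which dominates $P(\mV_{v,n}\neq\vuoto)$ --- the paper computes it through the exploration filtration, you compute it by expanding $Z_{v,n}$ over paths. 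Your route is more elementary and makes the independence structure completely explicit; in particular it sidesteps the mildly delicate point in the supermartingale argument that the level-$n$ exploration must not have already revealed the states of prospective level-$(n+1)$ vertices. The paper's phrasing, on the other hand, packages the recursion into a one-line supermartingale inequality and is the natural template if one later wants stronger (e.g.\ concentration) bounds rather than just a tail estimate. Both proofs are fine; the only bookkeeping to watch in yours --- that a distance-$n$ path has $n+1$ vertices while the bound has $n$ factors --- you handle correctly by discarding $\pi_{v_0}$.
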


\begin{proof}
Define the random variables $Z_{v,n}[\omega] = \card
\big(\mV_{v,n}[\omega]\big)$. The conditional mean value
$E(Z_{v,n+1}|Z_{v,n})$ verifies $ E(Z_{v,n+1}|Z_{v,n})\leq g\,
Z_{v,n}$, in particular the sequence $\{ Z_{v,n} \}_n$ is a
supermartingale. Hence the mean value $E(Z_{v,n})$ is less or equal
to $g^n$. By Markov inequality the claim follows.
\end{proof}

%%%%%%%%%%%%%%%%%%%%%%%%%%%%%%%%%%%%%%%%%%%%%%%%%%%%%%%%%%%%%%%%%%%%%%%%%%%%%%

%%%%%%%%%%%%%%%%%%%%%%%%%%%%%%%%%%%%%%%%%%%%%%%%%%%%%%%%%%%%%%%%%%%%%%%%%%%%%%

\subsection{Assumptions for the finiteness of clusters at low temperatures}

Now we give conditions on $G$ and $p$ such that the cluster
$C^w_F(u)$ defined in Section~\ref{sec:bassa} is finite for almost
all $u\in \calU$. We fix a polyhedral tessellation $\calP$ of
$\mR^d$. Let $G =(V,E)$ be the associated simplicial graph and let
$\calP^*$ and $ G^* =(V^*, E^*)$ be defined as in
Section~\ref{sec:bassa}.

For the proof of our next Theorem we introduce a new graph
$\mG=(\mV, \mE)$ defined as follows: $\mV= E^*$ and two elements
$e,e'\in \mV$ are joined by an edge in $\mE$ if and only if there
exist a cell in $\calP^*$ which contains $e $ and $e'$.

If $H\subset E^*$ and $u\in \calU$ we define also the subgraph
$\mG(H,u) = (\mV(H,u),\mE(H,u))$ of $\mG$ whose set of vertexes are
equal to $\mV(H,u) = H \senza B(u)$ and whose edges $\mE(H,u)$ are
all the edges of $\mE$ joining two vertexes in $\mV(H,u)$.

For $e \in E$ define
\begin{equation}\label{defg}
 \hat g _e = 2(1-\hat{p}_e) +3 \sum_{e' \in \Delta_\calP(\{e\})} (1-
 \hat{p}_{e'}),
\end{equation}
where $\hat{p}_e$ is defined in \eqref{eq:phat}.

\begin{theorem}\label{teo:pqbassa} Let $G=(V,E)$ be a simplicial graph in
$\mR^d$. Assume that $\hat p_e>0$ for all $e\in E $ and that
$$
\lim_{\Lambda \uparrow E} \sup_{ e \not \in \Lambda} \hat{g}_e<1.
$$
Then for all $e\in E$ the cluster $C^w_e(u)$ is finite for almost
all $u\in\calU$.
\end{theorem}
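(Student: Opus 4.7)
The plan is to translate the problem into a site-percolation problem on the auxiliary graph $\mG$ and then run the supermartingale estimate of Lemma~\ref{lem:casosemplice}. First, by unfolding the inductive description $D_0 = \{(-1,e)\}\senza B(u)$, $D_{i+1} = D_i \cup (\Delta_{\calP^*}(D_i)\senza B(u))$, one sees that $C^w_{\{e\}}(u)$ is precisely the $\mG$-connected component of $v_0:=(-1,e)$ consisting of non-black vertices. Equivalently, we are in the framework of Lemma~\ref{lem:casosemplice} with $\mG$ as above and with site-opening probabilities $\pi_v = 1-\hat p_{e_k}$ for a horizontal vertex $v=(n,e_k)$, and $\pi_v=0$ for a vertical vertex (since all vertical edges lie in $B(u)$).

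Second, I would identify the quantity $g_v$ of Lemma~\ref{lem:casosemplice} with the function $\hat g_e$ of the theorem. The $\mG$-neighbours of a horizontal $v=(n,e)$ are the edges of $\calP^*$ sharing a top-dimensional cell with $v$, and these top cells have the form $[n',n'+1]\times\tau$ with $n'\in\{n-1,n\}$ and $\tau\in P_d$ containing $e$. After discarding vertical edges (which contribute $0$ to $g_v$), what remains is: the two vertices $(n\pm1,e)$ contributing $2(1-\hat p_e)$, and the three vertices $(n-1,e'),(n,e'),(n+1,e')$ for each $e'\in\Delta_\calP(\{e\})$ contributing $3(1-\hat p_{e'})$. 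Summing reproduces exactly $g_v=\hat g_e$ as in \eqref{defg}.

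By the hypothesis one can now choose $g\in(0,1)$ and a finite $\Lambda\subset E$ with $\hat g_e\leq g$ for every $e\notin \Lambda$; using local finiteness of $\calP$, I would enlarge $\Lambda$ within the class of finite sets so that $\Delta_\calP(\{e\})\cap\Lambda=\vuoto$ whenever $e\notin\Lambda$. Applied to the induced subgraph of $\mG$ on $\mV\senza(\mZ\times\Lambda)$, Lemma~\ref{lem:casosemplice} then gives $P(\mV_{v,n}[\omega]\neq\vuoto)\leq g^n$ uniformly, so every connected component of non-black sites in $\mG\senza(\mZ\times\Lambda)$ is almost surely finite. If the starting vertex $v_0$ lies in this ``good'' region and the cluster never enters $\mZ\times\Lambda$, we are done.

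The main obstacle is that $\sup_v g_v$ can fail to be $<1$ on the infinite vertical strip $\mZ\times\Lambda$, so the Lemma cannot be applied globally. To bypass this I would control $|C^w_{\{e\}}(u)\cap(\mZ\times\Lambda)|$ as follows: since $\Lambda$ is finite, $\mZ\times\Lambda$ decomposes into finitely many ``columns'' $\mZ\times\{e_i\}$, and within each column the non-blackness is i.i.d.\ Bernoulli$(1-\hat p_{e_i})$, so (using $\hat p_{e_i}>0$) consecutive non-black runs in a column have geometric length and are almost surely finite. Organising the cluster as a multi-type object on types $\Lambda$ (with column-chains as internal offspring) and $\{\star\}$ (with side-excursions into $\mG\senza\mZ\times\Lambda$, each almost surely finite by the previous paragraph), one derives via a weighted supermartingale---whose weight $w$ takes one value on $\mZ\times\Lambda$ and another off it, the Lyapunov inequality $\sum_{v'\in A_v}\pi_{v'}w(v')\leq r\,w(v)$ with $r<1$ being the finite linear system to be solved on $\Lambda$---that $E[|C^w_{\{e\}}(u)|]<\infty$, whence finiteness a.s.\ by Markov. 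I expect the construction of this Lyapunov weight on $\Lambda$, exploiting simultaneously $\hat g_e\leq g<1$ off $\Lambda$ and $\hat p_e>0$ on $\Lambda$, to be the delicate technical step.
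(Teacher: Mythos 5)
Your set-up is correct and matches the paper's: the auxiliary graph $\mG$ with vertex set $E^*$, the site-opening probabilities $\pi_{(n,e)}=1-\hat p_e$ for horizontal edges (and $0$ for vertical ones, which are always black), and the identification $g_v=\hat g_e$ are exactly the ingredients the paper feeds into Lemma~\ref{lem:casosemplice}. You also correctly locate the obstacle: Lemma~\ref{lem:casosemplice} applies only off the infinite strip $\mZ\times\Lambda$, where $\Lambda$ is the finite ``bad'' set on which $\hat g_e$ may be $\geq 1$.

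Where the proposal breaks down is the Lyapunov step. The inequality you write, $\sum_{v'\in A_v}\pi_{v'}w(v')\leq r\,w(v)$ with $r<1$, cannot hold at a bad column vertex $v=(n,e_0)$, $e_0\in\Lambda$: among the neighbours of $v$ are $(n\pm 1,e_0)$, both of the same type as $v$, so the left side is at least $2(1-\hat p_{e_0})\,w(e_0)$. If $\hat p_{e_0}<1/2$ this already exceeds $w(e_0)$, and no choice of $r<1$ or of the remaining weights can repair this. The only hypothesis available on $\Lambda$ is $\hat p_e>0$, which imposes no upper bound on $2(1-\hat p_e)$, so the per-vertex Lyapunov system is simply infeasible; the hypothesis $\hat p_e>0$ controls the \emph{one-dimensional} recurrence along a column (runs of non-black sites are geometric), not the per-vertex drift. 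The remark about coarse-graining into ``column-chains'' is the right instinct, but the formula you then write is still the per-vertex one and does not capture it; and even a chain-level branching comparison is delicate, because one must show that re-entries of good-region excursions into the bad strip land at ``fresh'' levels, with a total count bounded uniformly in the chain length --- none of which is established. In fact you are attempting to prove the stronger statement $E\big[\card\, C^w_{\{e\}}\big]<\infty$, which the paper never claims and which does not obviously follow from the hypotheses.

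The paper sidesteps all of this by a different mechanism. It first observes that $I=\bigcup_e\{\card\, C^w_e=\infty\}$ is a tail event, so by Kolmogorov's $0$-$1$ law it suffices to prove $P(I)<1$. It then \emph{conditions} on a ``shield'' event $\calW_\ell$ in which a fixed block of $\ell_0+1$ consecutive levels of the bad columns $\mZ\times F$ is forced entirely black; this uses $\hat p_e>0$ and has probability $P_0>0$ independent of $\ell$. On this event the bad strip is severed, and the escape events $\calY_\ell$ (escape through the good region) and $\calZ_{\ell,i}$ (escape starting from a bad column at a prescribed depth) are controlled by Lemma~\ref{lem:casosemplice}; FKG is used to remove the conditioning on $\calW_\ell$. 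The bound obtained is only $P(\calX_\ell)\leq 1-P_0/2<1$, which is weak but enough once the $0$-$1$ law is in force. Your proposal omits both the $0$-$1$ law and the shield-conditioning/FKG step, and these are exactly what make the paper's proof avoid the infeasible drift condition that your Lyapunov inequality runs into.
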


\begin{proof}
First we do a preliminary remark: the event $I=\cup_{e \in E } \{u
\in \calU : \card\big(C^w_e(u)\big) = \infty \}$, is in the tail
$\sigma$-algebra. Therefore, by Kolmogorov $0$-$1$ law, this event
has probability zero or one, in particular to prove our claim it is
enough to prove that $P(I)<1$.

Define $F_n = \{ e \in E: \hat g_e > 1- \frac{1}{n} \}$.  By
assumption there exists $ n_0 $ such that $ \card(F_{n_0}) < \infty$
and set $F=F_{n_0}$ and $g=1-\tfrac{1}{n_0}$. Set also $\tilde F=F
\cup \Delta_\calP(F)$ and, for $\ell\geq 1$, define $ \hat F = \mZ
\times F \subset \mZ\times E\subset E^*$. Fix an edge $\hat e \in E$
and, for $\ell\geq 1$  set
$$
\Lambda_\ell=[-\ell,-1]\times B_\mG(\hat e,\ell) \subset \mZ\times E
$$
and set also $S_\ell = \card(\Lambda_{\ell})$.

Choose $\ell_0$ such that $\frac{3\,\card(\tilde
  F)}{1-g}g^{\ell_0}<\frac{1}{2}$ and, for $\ell\geq \ell_0+1$, define the events
\begin{align*}
\calW_\ell &= \{u\in \calU \st u_{n, k} < \hat p_{e_k} \hbox{ for
  any } (n, e_k) \in [-\ell - S_{\ell} ,-\ell-S_\ell+\ell_0]\times F \} \\
\calX_\ell &= \{ u \in \calU :
\card\big(C^w_{\Lambda_{\ell}}(u)\big)=\infty \}.
\end{align*}

We notice that the probability $P_0 = P( \calW_\ell )$ does not
depend on $\ell$ and it is a positive constant being $ \hat p_e >0$
for any $e\in E$. We notice also that $\calX_\ell\subset
\calX_{\ell+1}$ and that their union is equal to $I$. Therefore
$$
P(I) = P\left(\bigcup_\ell \calX_\ell\right) = \lim_{\ell \to
\infty} P(\calX_\ell).
$$
We also define the events
\begin{align*}
\calY_\ell =\{& u \in \calU : \text{ there exists a sequence }
e_1,\dots,e_m \in W(u)\cup M(u)\senza \hat F \\
& \text{such that } e_i \text{ is adjacent to } e_{i+1} \text{ in the graph } \mG, \\
& e_1 \in \Lambda_\ell \text{ and }
e_m \notin  \Lambda_{\ell+S_\ell} \}  \\
\tilde \calZ_{\ell,i} =\{& u \in \calU : \text{ there exists a
sequence } e_1,\dots,e_m \in W(u)\cup M(u)
\text{ such that } \\
& e_i \text{ is adjacent to } e_{i+1} \text{ in the graph } \mG,  \\
& e_1 \in \{-\ell-S_\ell+i\}\times F \text{, } e_m \notin
\Lambda_{\ell+S_\ell} \text{ and } e_i \notin \hat F \text{ for }
1<i<m\}.
\end{align*}
Finally define $\calZ_{\ell,1}=\tilde \calZ_{\ell,1}$ and
$\calZ_{\ell,i}=\tilde \calZ_{\ell,i}\senza \tilde \calZ_{\ell,i-1}$
for $i>1$. It is clear that
\begin{equation*}\label{XYZ}
 \calX_\ell \subset \calY_\ell \cup \bigcup_{i=1}^{\ell}
 \calZ_{\ell,i},
\end{equation*}
in particular $ P(\calX_\ell | \calW_\ell) \leq P( \calY_\ell |
\calW_\ell ) + \sum_{i=1}^\ell P( \calZ_{\ell,i} | \calW_\ell)$.

%Since $ P(\calW_l)$ is a positive constant in order to prove that $

%\limsup_{\ell \to \infty} P(\calX_\ell) < 1 $ it is enough to prove

%that $ \limsup_{\ell \to \infty} P(\calX_\ell | \calW_\ell) < 1$.

Now we notice that the events $\calW_\ell$ is decreasing meaning
that if $u\in \calX_\ell$ and $u' \in \calU$ is such that
$u'_{n,h}\leq u_{n,h}$ for all $n,h$ then $u'\in\calW_\ell$.  With a
similar definition the events $\calY_\ell$ and $\calZ_{\ell,i}$ are
increasing.  Hence, by the FKG inequality we obtain $P(\calY_{\ell}
| \calW_\ell) \leq P(\calY_\ell)$ and $P(\calZ_{\ell,i} |
\calW_\ell) \leq P(\calZ_{\ell,i})$ (see~\cite{GrimmettP},
Chapter~2). Hence, noticing that $P(\calZ_{\ell,i}|\calW_\ell) = 0$
for $i < \ell_0$, we get $P(\calX_\ell | \calW_\ell) \leq
P(\calY_\ell)+\sum_{i=\ell_0}P(\calZ_{\ell,i})$.

Now we estimate $P(\calY_\ell)$ and $P(\calZ_{\ell,i})$ using
Lemma~\ref{lem:casosemplice}. We start with $\calY_\ell$.  Consider
the random graph $\mG(\hat F ^c,u)$ and set $\pi_e=1-\hat p_e$.
Notice that $\calY_\ell\subset \bigcup_{e\in \Lambda_\ell} \{u\in
\calU\st \mV(\hat F^c,u)_{e,S_\ell}\neq \vuoto\}$ hence using
Lemma~\ref{lem:casosemplice} we get
$$
P(\calY_\ell)\leq \card(\Lambda_\ell) \, g^{S_\ell} = S_\ell \,
g^{S_\ell},
$$
for $\ell $ large enough. For $\calZ_{\ell.i}$ we proceed in a
similar way. Consider again the random graph $\mG(\hat F ^c,u)$.
Notice that in the sequence $e_1,\dots,e_m$ which appears in the
definition of $\tilde \calZ_{\ell,i}$ the subsequence
$e_2\dots,e_{m-1}$ is in $\hat F^c$ and $e_2 \in \tilde
F_{\ell,i}:=\{-\ell-S_\ell+i-1,-\ell-S_\ell+i,-\ell-S_\ell+i+1\}\times
\tilde F$. Hence $\calZ_{\ell,i}\subset  \bigcup_{e\in \tilde
F_{\ell,i}} \{u\in \calU\st \mV(\hat F^c,u)_{e,i}\neq \vuoto\}$ and
using Lemma~\ref{lem:casosemplice} we get
$$
P(\calZ_{\ell,i})\leq \card(\tilde F_{\ell,i}) \, g^i = 3\,
\card(\tilde F) \, g^i.
$$
Recall that $P_0=P(\calW_\ell)$ does not depend on $\ell$, hence we
have
\begin{align*}
P(\calX_\ell)  & = P(\calX_\ell | \calW_\ell) P(\calW_\ell) +
P(\calX_\ell | \calW_\ell^c) P(\calW_\ell^c) \leq
P(\calW^c_\ell) + P(\calX_\ell | \calW_\ell) P(\calW_\ell) \\
& \leq 1 - P_0
+ P_0 \big( P( \calY_\ell ) + \sum_{i=\ell_0}^\infty P( \calZ_{\ell,i}) \big) \\
& \leq 1 - P_0 + P_0 \big(S_\ell \, g^{S_\ell}
+\frac{3\,\card(\tilde
F)}{1-g}g^{\ell_0} \big) \\
& \leq 1 - \frac{P_0}2 + S_\ell \, g^{S_\ell} P_0.
\end{align*}
Finally notice that $ \lim_{\ell \to \infty}S_\ell \,
g^{{S_\ell}}=0$. Hence
$$
\lim_{\ell \to \infty}P(\calX_\ell)  \leq 1 - \frac{P_0}{2}<1
$$
as claimed.\end{proof}

\subsection{Assumptions for the finiteness of clusters at high temperature}

Let $G=(V,E)$ be a countable graph of finite degree. We remark that
in this case we do not need to assume that $G$ is a simplicial
graph. For all $e\in E$ define
$$
g_e = 2\, p_e + 3\,\sum _{e'\in \Gamma(\{e\})} p_{e'}.
$$

\begin{theorem}\label{teo:pqalta}

Let $G$ be a countable graph of finite degree. If $p_e<1$ for all
$e\in E$ and
$$
\lim_{\Lambda \uparrow E} \sup_{ e \not \in \Lambda} g_e<1
$$ then for all $e\in E$ the set $C^b_e(u)$ is finite for almost all
$u \in \calU$.
\end{theorem}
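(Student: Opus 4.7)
The plan is to adapt the proof of Theorem \ref{teo:pqbassa} almost verbatim, exchanging the roles of white and non-white (black or gray) edges and replacing polyhedral cell adjacency by the simpler notion of spacetime vertex adjacency in $\bar G$. Since the definition of $C^b_F$ is purely graph-theoretic, no input from Section \ref{sec:grafisimpliciali} is required, and the argument is in fact shorter than its low-temperature counterpart.

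By Kolmogorov's $0$--$1$ law applied to the tail event $I:=\bigcup_{e\in E}\{\card(C^b_e(u))=\infty\}$, it suffices to prove $P(I)<1$. The hypothesis gives $n_0$ with $F:=\{e\st g_e>1-1/n_0\}$ finite; set $g:=1-1/n_0<1$, $\tilde F:=F\cup\Gamma(F)$, and $\hat F:=\mZ_{<0}\times F$. Introduce the auxiliary graph $\mG=(\mV,\mE)$ with $\mV=\bar E$, where $(n,e)\sim(n',e')$ iff $(n,e)\ne(n',e')$, $|n-n'|\le 1$, and $e,e'$ share a vertex in $G$. Taking $\pi_{(n,e)}:=p_e$ (the probability that $(n,e)\notin W(u)$), the adjacency sum at $(n,e)$ equals $2p_e+3\sum_{e'\in\Gamma(\{e\})}p_{e'}=g_e$, which is at most $g$ outside $\hat F$; hence Lemma \ref{lem:casosemplice} applied to the $\mG$-subgraph of non-white vertices in $\hat F^c$ yields the exponential bound $P(\mV(\hat F^c,u)_{v,k}\ne\vuoto)\le g^k$.

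Fix an edge $\hat e\in E$, let $\Lambda_\ell$ be a growing spacetime box around $\hat e$ with cardinality $S_\ell$, and define the white-wall event
\[
\calW_\ell:=\{u\st u_{n,k}\ge p_{e_k}\text{ for all }(n,e_k)\in[-\ell-S_\ell,\,-\ell-S_\ell+\ell_0]\times F\},
\]
whose probability $P_0>0$ does not depend on $\ell$ since $F$ is finite and $p_e<1$ on $F$. Following the low-temperature decomposition, write $\calX_\ell:=\{\card(C^b_{\Lambda_\ell})=\infty\}\subset\calY_\ell\cup\bigcup_{i=\ell_0}^{\ell}\calZ_{\ell,i}$, where $\calY_\ell$ demands a $\mG$-path of non-white vertices from $\Lambda_\ell$ to the exterior of $\Lambda_{\ell+S_\ell}$ avoiding $\hat F$, and $\calZ_{\ell,i}$ demands such a path whose initial vertex lies in $\{-\ell-S_\ell+i\}\times F$ and whose interior stays in $\hat F^c$. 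The events $\calY_\ell,\calZ_{\ell,i}$ are decreasing in $u$ while $\calW_\ell$ is increasing, so FKG yields $P(\calY_\ell\mid\calW_\ell)\le P(\calY_\ell)\le S_\ell g^{S_\ell}$ and $P(\calZ_{\ell,i}\mid\calW_\ell)\le P(\calZ_{\ell,i})\le 3\card(\tilde F)g^i$; choosing $\ell_0$ with $\tfrac{3\card(\tilde F)}{1-g}g^{\ell_0}<1/2$ and letting $\ell\to\infty$ then gives $\limsup P(\calX_\ell)\le 1-P_0/2<1$, and hence $P(I)=0$.

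The only genuinely new verification is geometric: the $(\ell_0+1)$-thick white wall over $F$, combined with the fact that spacetime adjacency moves the time coordinate by at most one, must force any non-white $\mG$-path leaving $\Lambda_\ell$ to be described either by $\calY_\ell$ or by some $\calZ_{\ell,i}$ with $i\ge\ell_0$. Every other estimate is a direct transcription from the proof of Theorem~\ref{teo:pqbassa} with $\hat p_e$ replaced by $1-p_e$ and $B(u)$ replaced by $W(u)$ throughout.
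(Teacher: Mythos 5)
Your proposal is correct and follows exactly the route the paper itself indicates: the paper's own proof of Theorem~\ref{teo:pqalta} is simply the remark that the argument of Theorem~\ref{teo:pqbassa} carries over verbatim but without needing Section~\ref{sec:grafisimpliciali}, and your transcription (white wall $u_{n,k}\ge p_{e_k}$, $\pi_{(n,e)}=p_e$, the $\mG$-adjacency with $|n-n'|\le1$ and shared vertex giving the sum $2p_e+3\sum_{e'\in\Gamma(\{e\})}p_{e'}=g_e$, the reversed monotonicities and FKG, and the same $\calY_\ell/\calZ_{\ell,i}$ decomposition) supplies exactly the details the paper leaves implicit. The only cosmetic imprecision is writing $\calX_\ell\subset\calY_\ell\cup\bigcup_{i=\ell_0}^{\ell}\calZ_{\ell,i}$ where the inclusion actually runs over $i=1,\dots,\ell$ and the restriction to $i\ge\ell_0$ holds only after conditioning on $\calW_\ell$, as in the paper.
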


The proof follows exactly the same lines of the proof of Theorem
\ref{teo:pqbassa}, however we do not need any result from Section
\ref{sec:grafisimpliciali}.

%%%%%%%%%%%%%%%%%%%%%%%%%%%%%%%%%%%%%%%%%%%%%%%%%%%%%%%%%%%%%%%%%%%

%%%%%%%%%%%%%%%%%%%%%%%%%%%%%%%%%%%%%%%%%%%%%%%%%%%%%%%%%%%%%%%%%%%

\section{Perfect simulation of the random cluster measure
at low or high temperature} \label{sec:simulazione}

As an application of the previous results we now explain how to
prove uniqueness of the random cluster measure and how to obtain a
perfect simulation of the random cluster measure using the results
of the previous sections. We explain these results in the case of
low temperatures. The case of high temperatures can be obtained in a
similar way.  In this section, from now on we assume that $G$ is a
simplicial graph, that $p_e>0$ for all $e\in E$ and that $
\lim_{\Lambda \uparrow E} \sup_{ e \not \in \Lambda} \hat{g}_e<1$.
The uniqueness proved in the following Corollary is well known at
least in the case of $\mathbb L^d$.

\begin{corollary}\label{cor:unicita}Assuming the hypotheses above
the random cluster measure on $G$ is unique.
\end{corollary}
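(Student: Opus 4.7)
The plan is to exploit the coupling provided by Theorem \ref{teorema1} together with the invariance of every element of $\calR_{\ppp,q}$ under the $FK^N_{\ppp,q}$-dynamics recalled in Section \ref{sec:dinamica}, and the almost-sure finiteness of white/gray clusters guaranteed by Theorem \ref{teo:pqbassa}. Fix two arbitrary DLR measures $\phi,\phi'\in\calR_{\ppp,q}$ and a finite subset $F\subset E$; it suffices to show that $\phi$ and $\phi'$ induce the same law on the coordinates indexed by $F$, because $\calF$ is generated by finite cylinders.

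By Theorem \ref{teo:pqbassa} (applied to each edge of $F$, using the monotonicity $C^w_F(u)\subset \bigcup_{e\in F}C^w_{\{e\}}(u)$), the cluster $C^w_F(u)$ is finite for almost every $u\in\calU$; equivalently, the events $A_N=\{u\st N^w(u,F)\geq N\}$ increase to a set of full measure as $N\to-\infty$. Sample $u$ from the product Lebesgue measure on $\calU$ and, independently of $u$, draw $\omega\sim\phi$ and $\omega'\sim\phi'$. For each fixed negative integer $N$, invariance of $\phi$ and $\phi'$ under the $FK^N_{\ppp,q}$-dynamics implies that $X_{0,0}^{(\omega,N)}(u)$ has law $\phi$ and $X_{0,0}^{(\omega',N)}(u)$ has law $\phi'$. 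On the event $A_N$, Theorem \ref{teorema1} (applied with $u'=u$) guarantees that these two random configurations agree on $\bar H_{-1}^w(u,F)\supset F$, so for every cylinder event $B$ depending only on coordinates in $F$,
$$ |\phi(B)-\phi'(B)| \leq 2\,P(A_N^c). $$
Letting $N\to -\infty$ gives $\phi(B)=\phi'(B)$, hence $\phi=\phi'$.

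The only subtle point is that the threshold $N^w(u,F)$ after which the coupling succeeds is itself random; the device of working at a deterministic $N$ and absorbing the non-coupled event into the error term $2\,P(A_N^c)\to 0$ sidesteps any measurability issue and is the cleanest route. Combined with the fact that $\phi^0_{\ppp,q},\phi^1_{\ppp,q}\in\calR_{\ppp,q}$, this yields $\calR_{\ppp,q}=\{\phi_{\ppp,q}^0\}$, and the equality $\calR_{\ppp,q}=\calW_{\ppp,q}$ recorded at the end of Section \ref{sec:misure} then gives uniqueness of the limit random cluster measure as well.
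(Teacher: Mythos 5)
Your proof is correct and follows essentially the same approach as the paper: both invoke the invariance of DLR measures under the $FK^N_{\ppp,q}$-process, apply Theorem \ref{teorema1} to couple the processes on $\bar H_{-1}^w(u,F)\supset F$ whenever $N\leq N^w(u,F)$, and use Theorem \ref{teo:pqbassa} to send the failure probability to zero. The only cosmetic differences are that the paper bounds the total variation distance directly by $P(N^w(u,F)\leq N)$ without the factor of $2$, and that it omits the closing remark about $\calW_{\ppp,q}$; neither affects correctness.
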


\begin{proof}
Let  $\phi,\phi'$ be two DLR random cluster measures.

To prove that $\phi$ and $\phi'$ are equal we prove that for each
finite subset $F$ of $E$ the projections $\phi_F$ and $\phi'_F$ of
$\phi$ and $\phi'$ onto $\{0,1\}^F$ are equal. We denote also by
$X^{(N,\omega)}_F\in\{0,1\}^F$ the projection of
$X^{(N,\omega)}_{0,0}$.

Let $\omega$ be a random variable with law $\phi$ and $\omega'$ a
random variable with law $\phi'$.  By Theorem \ref{teorema1} we have
that
$$
\| X_F^{(N,\omega)} - X_F^{(N,\omega')} \|_{TV} \leq P\big(u\in
\calU\st N^w(u,F)\leq N \big)
$$
where the lefthandterm is the total variation distance between the
law of $X_F^{(N,\omega)}$ and $X_F^{(N,\omega')}$. Recall now that
as noticed in Section \ref{sec:dinamica} the DLR random cluster
measures are invariant under a $FK^N_{p,q}$-process. Hence, since
$\omega$ has law $\phi$, the random variable $X^{(N,\omega)}_F$ has
law $\phi_F$ and $X^{(N,\omega')}_F$ has law $\phi'_F$. Hence we get
$\| \phi_F - \phi'_F \|_{TV} \leq P\big(u\in \calU\st N^w(u,F)\leq N
\big)$. Finally by Theorem \ref{teo:pqbassa}, $C_F^w(u)$ is finite
for almost all $u\in \calU$. Hence $P(u\in \calU\st N^w(u,F)\leq N)$
goes to zero as $N$ goes to infinity. Hence $\phi$ and $\phi'$ are
equal.
\end{proof}

Now, under the same assumptions, given a finite subset $F$ of $E$ we
briefly describe an algorithm which furnishes a sampling of the
random cluster measure on $F$.

Let be given a generator of independent random numbers $u_{n,k}$.
The algorithm takes $F$ as an input, a suitable description of the
tessellation $\calP$, and gives as output subsets $C$, $\bar H$ of
$E^*$, a subset $\bar H_{-1}$ of $E$ and a configuration $Y\in
\{0,1\}^{\bar H_{-1}}$. It uses also local variables $D,D',F',L$ and
generates $u_{n,k}$ for $(n,e_k)\in \bar H$. We uses also the
notation of $B(u),M(u),W(u)$ for black, gray and white edges
introduced at the beginning of Section \ref{sec:bassa}. We describe
the algorithm with the following pseudocode.

\begin{enumerate}[\indent Step 1:]
\item generate the random numbers $u_{n,k}$ for $n=-1$ and $e_k\in
F$ and set $F'=\{-1\}\times F$;
\item set $D= F'\senza B(u))$;
\item set $L=D\cup \Delta_{\calP^*}(D)$ and generates the random
numbers $u_{n,k}$ for $(n,e_k)\in L\senza (D\cup F')$;
\item set $D'=D\cup (L \senza B(u))$ as in the construction of the
sets $D_i$ at the beginning of Section \ref{sec:bassa};
\item if $D'\neq D$ assigning to $D$ the value
given by $D'$ and goes to step 3;
\item if $D'=D$ then $C=D\cup F'$ and compute $N=N^{w}(u,F)$ as in Section \ref{sec:bassa};
\item use the formula \eqref{eq:H} to compute the sets $H^w_n$ and define
$\bar H=\bigcup_{n=N}^{-1} \{n\}\times \bar H_n$ where  $\bar
H_n=H^w_n\cup \Delta_\calP(H^w_n)$ as in Section \ref{sec:bassa};
\item generate the random numbers $u_{n,k}$ for $(n,e_k)\in \bar H\senza
C$;
\item $e\in \bar H_{-1}$ use the process described in Section
\ref{sec:dinamica} to compute the value $Y_e=X_{0,0}^{(\omega,N)}$
where $\omega_e=0$ for all $e\in \bar H_{N}$. As explained in the
proof of Theorem \ref{teorema1}, by Proposition \ref{prp:contorno2},
for this computation it is enough to know the value of $u$ only
inside the region $\bar H$.
\end{enumerate}

Notice that under our assumption, almost surely, this algorithm will
end in a finite number of steps. Notice also that the output $C$ is
the set $C^w_F(u,F)$, $\bar H$ is the set $\bar H^w(u,F)$. Finally
as explained in proof of uniqueness above $Y$ has law $\phi_F$ where
$\phi_F$ is the projection onto $\{0,1\}^F$ of the unique random
cluster measure on $\{0,1\}^E$.

In the case $\sup_{e\in E} \hat g_e<1$ it can be easily proved that
the average complexity of this algorithm goes linearly with the
cardinality of $F$.

%%%%%%%%%%%%%%%%%%%%%%%%%%%%%%%%%%%%%%%%%%%%%%%%%%%%%%%%%%%%%%%%%%%

%%%%%%%%%%%%%%%%%%%%%%%%%%%%%%%%%%%%%%%%%%%%%%%%%%%%%%%%%%%%%%%%%%%

\section{Applications to the Ising and Potts Model at low or high temperatures}

\label{sec:applicazioni}

We apply the results obtained in the previous sections for the
random cluster measure to the construction of a perfect simulation
of the Ising and of the Potts model with free boundary condition.

Let $G=(V,E)$ be a countable graph of finite degree. We briefly
recall how the Ising model with free boundary conditions can be
obtained from the random cluster measure with $q=2$, see the
original paper by Fortuin and Kasteleyn \cite{FK}, the book of
Grimmett \cite{Grimmett}, Chapter 1 or the book of Newman
\cite{Newman}, Chapter 3 for the details. Let $\grb \in (0,\infty)$
be the parameter of the Ising model called the inverse of the
temperature and let $J_e$ for $e\in E$ be the positive parameters
defining the Ising model called the interactions. In the random
cluster measure choose $q=2$ and $p_e=1-e^{-\grb J_e}$ for all $e\in
E$. Now we color, with $+$ or $-$, each vertex of $G$ using the
following rule. Consider the subgraph $G(\omega)=(V,E(\omega))$ of
$G$ where $\omega$ is selected using the random cluster measure
$\phi^0_{p,2}$ and $E(\omega)$ is defined as in Section
\ref{sec:misure}. Color the connected components of $G(\omega)$ with
$+$ or $-$ with an independent and uniform probability $1/2$.
Finally color each vertex with the same color of the connected
component containing the vertex. A similar construction can be used
to obtain the Potts model with free boundary conditions with the
only difference that there are $n$ colors, and we have to use the
random cluster measure $\phi^0_{p,n}$. Obviously the Ising model is
a particular case of the Potts model so that we will study this
second one in what follows.

\medskip

We explain how to obtain a perfect simulation of the Potts model
with free boundary condition in the case of low temperature. The
case of high temperature is completely analogous. So assume that the
hypotheses of Theorems~\ref{teorema1} and \ref{teo:pqbassa} are
satisfied: $G$ is a simplicial graph associated to the tessellation
$\calP$, $p_e>0$ for all $e\in E$ and $ \lim_{\Lambda \uparrow E}
\sup_{ e \not \in \Lambda} \hat{g}_e<1$. Given a finite subset $W$
of $V$ we briefly describe an algorithm which furnishes a sampling
of the Potts model on $W$.

Let $F'$ be the set of edges having at least one end vertex in $W$
and let $F$ be a connected set of edges containing $F'$. Now we use
the algorithm explained in the previous section to obtain a sampling
of the random cluster measure. In particular given $u$ we produce a
set $H = H^w_{-1}(u,F)$ and a configuration $Y=X_{0,0}^{(\omega,N)}$
(see Theorem \ref{teorema1} for the explanation of this notation).
Define also $K$ as the connected component of $H$ containing $F$ and
$\bar K$ as $K\cup \Delta_\calP(K)$. Let $V_{\bar K}$ be the set of
end vertexes of the edges in $\bar K$. Let also $GK(Y)$ be the
subgraph of $G(Y)$ having as vertexes the set $V_{\bar K}$ and as
edges the set $\bar K \cap E(Y)$. Notice that, as in the proof of
Theorem \ref{teorema1}, if $e \in \Delta_\calP(H)$ then $Y_e=1$.
Hence, by Proposition \ref{prp:contorno2}, if $x,y\in V_{\bar K}$
then they are in the same connected component of the graph $G(Y)$ if
and only if they are in same connected component of the graph
$GK(Y)$ which under our assumption, by Theorem \ref{teo:pqbassa} is
almost surely a finite graph. We determine the connected components
of $GK(Y)$ and we color each connected component as prescribed by
the Potts model. Finally we color each vertex in $W$ with the same
color of the connected component containing it.

It is easy to translate this description of this algorithm in an
actual pseudocode as we have done in the previous section.

%Arguing as in the proof of Corollary \ref{cor:unicita} and using

%Proposition~\ref{prp:contorno2} it is easy to prove that this

%algorithm gives a perfects sampling of the Potts model.

\bibliographystyle{alpha}

\end{document}